 \theoremstyle{plain}
\newtheorem{theorem}{Theorem}
\newtheorem{corollary}[theorem]{Corollary}
\newtheorem{lemma}[theorem]{Lemma}
\newtheorem{proposition}[theorem]{Proposition}
\newtheorem{example}{Example}
\theoremstyle{definition}
\newtheorem{definition}{Definition}
\theoremstyle{remark}
\numberwithin{equation}{section}
\numberwithin{theorem}{section}
\newcommand{\bT}{\begin{theorem}}
\newcommand{\eT}{\end{theorem}}
\newcommand{\bProp}{\begin{proposition}}
\newcommand{\eProp}{\end{proposition}}
\newcommand{\bE}{\begin{example}}
\newcommand{\eE}{\end{example}}
\newcommand{\bL}{\begin{lemma}}
\newcommand{\eL}{\end{lemma}}
\newcommand{\bP}{\begin{proof}}
\newcommand{\eP}{\end{proof}}
\newcommand{\bC}{\begin{corollary}}
\newcommand{\eC}{\end{corollary}}
\newcommand{\bD}{\begin{definition}}
\newcommand{\eD}{\end{definition}}
\newcommand{\be}{\begin{enumerate}}
\newcommand{\ee}{\end{enumerate}}
\newcommand{\beqa}{\begin{eqnarray*}}
\newcommand{\eeqa}{\end{eqnarray*}}
\newcommand{\beqaa}{\begin{eqnarray}}
\newcommand{\eeqaa}{\end{eqnarray}}
\newcommand{\ba}{\begin{array}}
\newcommand{\ea}{\end{array}}
\newdimen\plusheight
\def\+{\;\lower\plusheight\hbox{$+$}\;}
\newdimen\minusheight
\def\-{\;\lower\minusheight\hbox{$-$}\;}
\newdimen\cdotsheight
\def\cds{\lower\cdotsheight\hbox{$\cdots$}}
\begin{document}
\title[Lambert series, vanishing coefficients and $m$-dissections ]
       {Some Observations on Lambert series, vanishing coefficients and dissections of infinite products and series   }
       {\allowdisplaybreaks
\author{James Mc Laughlin}
\address{Mathematics Department\\
 25 University Avenue\\
West Chester University, West Chester, PA 19383}
\email{jmclaughl@wcupa.edu}}


 \keywords{ $q$-Series, Infinite Products, Infinite $q$-Products, Vanishing Coefficients, $m$-dissections, Ramanujan's $_1\psi_1$ identity }
 \subjclass[2000]{Primary:11B65. Secondary: 33D15, 05A19.}
\date{\today}

\begin{abstract}
Andrews and Bressoud, Alladi and Gordon, and others, have proven, in a number of papers, that the coefficients in various arithmetic progressions in the series expansions of certain infinite $q$-products vanish. In the present paper it is shown that these results follow automatically (simply by specializing parameters) in an identity derived from a special case of Ramanujan's $_1\psi_1$ identity.

Likewise, a number of authors have proven results about the $m$-dissections of certain infinite $q$-products using various methods.  It is  shown that many of these $m$-dissections also follow automatically (again simply by specializing parameters) from this same identity alluded to above.

Two identities that mat be considered as extensions of two Identities of Ramanujan are also derived.

It is also shown how applying  similar ideas to certain other Lambert series gives rise to some rather curious $q$-series identities, such as, for any positive integer $m$,
\begin{multline*}
{\displaystyle
\frac{\left(q,q,a,\frac{q}{a},\frac{b q}{d}, \frac{dq}{b}, \frac{aq}{b d},
\frac{b d q}{a};q\right)_{\infty }}
{\left(b,\frac{q}{b},d,\frac{q}{d},\frac{a}{b},\frac{bq}{a},\frac{a}{d},\frac{dq}{a};q\right)_{\infty }}}\\
   =
\sum _{r=0}^{m-1} q^r
\frac{
\left(q^m,q^m,a q^{2 r},\frac{q^{m-2 r}}{a},\frac{b q^m}{d},\frac{d q^m}{b},
\frac{a q^m}{b d},\frac{b dq^m}{a};q^m\right){}_{\infty }}
{\left(b q^r,\frac{q^{m-r}}{b},d q^r,\frac{q^{m-r}}{d},\frac{a q^r}{b},\frac{b q^{m-r}}{a},\frac{a q^r}{d},
\frac{dq^{m-r}}{a};q^m\right){}_{\infty }}
\end{multline*}
 and
 \begin{equation*}
(aq;q)_{\infty}\sum_{n=1}^{\infty} \frac{n a^n q^{n}}{(q;q)_n}
= \sum_{r=1}^{m}(aq^{r};q^m)_{\infty}\sum_{n=1}^{\infty} \frac{na^n q^{n r}}{(q^m;q^m)_n}.
\end{equation*}
Applications to the Fine function $F(a,b;t)$ are also considered.
\end{abstract}

\maketitle

\section{Introduction}

The origins of this paper lie in the entirely elementary observation that if
\begin{equation}\label{lam1eqa}
f(a,b,c,q):=\sum_{n=-\infty}^{\infty}\frac{a b^n}{1-c q^n},
\end{equation}
and $m$ is any positive integer, then after replacing $n$ with $nm+r$ and rearranging into arithmetic progressions modulo $m$,
\begin{equation}\label{lam1eqb}
f(a,b,c,q)=\sum_{r=0}^{m-1}f(a b^r, b^m, c q^r, q^m).
\end{equation}
A similar statement can be made for a  function
\begin{equation}\label{lam2eqa}
f^{*}(a,b,c,q):=\sum_{n=0}^{\infty}\frac{a b^n}{1-c q^n}.
\end{equation}

The usefulness of this derives from the fact if $f(a,b,c,q)$ has some other representation as a basic hypergeometric series or infinite $q$-product, say $f(a,b,c,q) = g(a,b,c,q)$, then for any positive integer $m$,
\begin{equation}\label{lam1eqc}
g(a,b,c,q)=\sum_{r=0}^{m-1}g(a b^r, b^m, c q^r, q^m).
\end{equation}
A similar statement may be made about $f^{*}(a,b,c,q)$.

As will be shown, this will lead to proofs of various results on vanishing coefficients, $m$-dissections of various infinite products, and will also result in a number of new, somewhat unusual
$q$-series expansions involving an integer parameter $m$. In the case of the results on vanishing coefficients and dissection of infinite products, the results will follow immediately upon specialization of parameters in a general identity, in contrast to previous proofs of these results, which employed various manipulations of infinite series.

Some examples of existing results proved by the methods of the present paper are given below. The
first two use a general identity derived from Ramanujan's $_1\psi_1$ summation formula (valid for $|q|<1$, $|b/a|<|z|<1$),
\begin{equation}\label{ram1}
\sum_{n=-\infty}^{\infty} \frac{(a;q)_nz^n}{(b;q)_n}=\frac{(b/a,q,az,q/az;q)_{\infty}}{(q/a,b,z,b/az;q)_{\infty}},
\end{equation}

The first example is a general result on vanishing coefficients.
\begin{theorem}[Mc Laughlin, \cite{McL15}]
Let $k>1$, $m>1$ be positive integers. Let $r=sm+t$, for some integers $s$ and $t$, where $0\leq s<k$, $1\leq t <m$ and $r$ and $k$ are relatively prime. Let
\begin{equation}
\frac{(q^{r-tk},q^{mk-(r-tk)};q^{mk})_{\infty}}{(q^{r},q^{mk-r};q^{mk})_{\infty}}=:\sum_{n=0}^{\infty} c_nq^n,
\end{equation}
then $c_{kn-rs}$ is always zero.
\end{theorem}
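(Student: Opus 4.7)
The plan is to realize the quotient on the left-hand side as a normalized bilateral Lambert series, then apply the dissection principle~(1.3) to that series with modulus $k$, and finally observe that exactly one residue class in the dissection is identically zero; this vanishing class will turn out to be precisely the one containing all exponents of the form $kn-rs$.

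For the first step I would record the specialization of Ramanujan's $_1\psi_1$ identity (1.5) given by $a=c$, $b=cq$, namely
\[
\sum_{n=-\infty}^{\infty}\frac{z^n}{1-cq^n}=\frac{(q,q,cz,q/(cz);q)_\infty}{(q/c,c,z,q/z;q)_\infty}.
\]
After replacing $q$ by $q^{mk}$ and substituting $c=q^{-tk}$, $z=q^r$, the numerator already matches $(q^{mk},q^{mk},q^{r-tk},q^{mk-r+tk};q^{mk})_\infty$. A short manipulation combines the two extra denominator factors $(q^{mk+tk},q^{-tk};q^{mk})_\infty$ into $-q^{-tk}(q^{tk},q^{mk-tk};q^{mk})_\infty$, producing
\[
\frac{(q^{r-tk},q^{mk-r+tk};q^{mk})_\infty}{(q^r,q^{mk-r};q^{mk})_\infty}=\frac{-q^{-tk}(q^{tk},q^{mk-tk};q^{mk})_\infty}{(q^{mk};q^{mk})_\infty^2}\sum_{n=-\infty}^{\infty}\frac{q^{rn}}{1-q^{-tk+mkn}},
\]
where the prefactor has a $q$-expansion supported only on exponents that are multiples of $k$.

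Next I would apply the dissection~(1.3) with $m\to k$ to the bilateral sum. The $j$-th piece is $q^{rj}\sum_N q^{rkN}/(1-q^{-tk+mkj+mk^2 N})$, whose $q$-expansion lives entirely in the residue class $rj\pmod k$. Because $\gcd(r,k)=1$, the class $-rs\pmod k$ (which contains every exponent $kn-rs$) is produced by a single index $j_0\equiv -s\pmod k$, namely $j_0=k-s$ for $s>0$ and $j_0=0$ for $s=0$. The problem therefore reduces to showing that the $j_0$-th inner Lambert sum vanishes identically.

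The main obstacle, and the step where both hypotheses $\gcd(r,k)=1$ and $r=sm+t$ really bite, is this final vanishing. I would establish it by invoking $_1\psi_1$ a second time, now with base $Q=q^{mk^2}$, parameter $z=q^{rk}$, and $c=q^{mkj_0-tk}$. Using $r=sm+t$ gives the single decisive identity $cz=q^{km(j_0+s)}$: for $s>0$ we have $j_0+s=k$, so $cz=Q$ and the factor $(Q/(cz);Q)_\infty=(1;Q)_\infty=0$ kills the sum; for $s=0$ we have $j_0+s=0$, so $cz=1$ and the factor $(cz;Q)_\infty=(1;Q)_\infty=0$ kills the sum. In either case the $j_0$-th summand is identically zero, and the theorem follows. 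The other non-routine ingredient is the initial bookkeeping that collapses the $4/4$ product ratio supplied by $_1\psi_1$ down to the $2/2$ ratio in the theorem; once that matching is found, everything afterwards is essentially forced.
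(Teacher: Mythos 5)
Your argument is correct and is essentially the paper's own proof: the paper likewise takes the $b=aq$ case of the $_1\psi_1$ identity to realize the product as the bilateral Lambert series $\sum_{n}z^n/(1-aq^n)$, dissects that series modulo $k$ and re-applies the same identity to each piece (Proposition \ref{prop1}), chooses the free parameter so that exactly one term acquires the factor $(q^{s-j};q^p)_{\infty}=(1;q^p)_{\infty}=0$ at $j=s$ (Corollary \ref{c1}), and then invokes $\gcd(r,k)=1$ exactly as you do to identify that term with the progression $kn-rs$ (Corollary \ref{c2}). The only differences are organizational — the paper keeps $z$ generic until the final step and tracks powers of $z$, with parameters $a=q^{kt}$, $z=q^{mk-r}$ rather than your $c=q^{-tk}$, $z=q^{r}$ — while the key identity and the vanishing mechanism $(1;Q)_{\infty}=0$ are identical.
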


The second is an example of the application of the general identity to $m$-dissections of infinite products, where the 3-dissection of the infinite product on the left below is given.
\begin{corollary}$($Lin, \cite[Theorem 3.2]{L13}$)$
If $|q|<1$, then
\begin{multline}
\frac{(q^{7},q^{5};q^{12})_{\infty}}
 {(q,q^{11};q^{12})_{\infty}}
 =\frac{(q^{6},q^{30};q^{36})_{\infty}}
 {(q^{12},q^{12},q^{24},q^{24};q^{36})_{\infty}} \times\\
 \left(
 \frac{(q^{9},q^{18},q^{18},q^{27};q^{36})_{\infty}}
 {(q^{3},q^{33};q^{36})_{\infty}}
 +
 q\frac{(q^{6},q^{15},q^{21},q^{30};q^{36})_{\infty}}
 {(q^{3},q^{33};q^{36})_{\infty}}
+
q^2(q^{18},q^{18};q^{36})_{\infty}
 \right).
\end{multline}
\end{corollary}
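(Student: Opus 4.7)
The plan is to realise Lin's 3-dissection as a single specialization of a general $m$-dissection identity that comes out of (1.4) after a two-parameter restriction, followed by a routine product rewrite.

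First, in Ramanujan's $_1\psi_1$ identity I would set $a=c$ and $b=cq$, so that $(c;q)_n/(cq;q)_n$ telescopes to $(1-c)/(1-cq^n)$. Together with the factor $1-c$ absorbed on the right via $(cq;q)_\infty(1-c)=(c;q)_\infty$, this yields the symmetric Lambert-product evaluation
$$\sum_{n=-\infty}^{\infty}\frac{z^n}{1-cq^n}=\frac{(q,q,cz,q/(cz);q)_\infty}{(c,q/c,z,q/z;q)_\infty}.$$
The left-hand side is exactly $f(1,z,c,q)$ in the notation of (1.1), so the elementary arithmetic-progression rearrangement (1.2)--(1.3) immediately produces the master dissection
$$\frac{(q,q,cz,q/(cz);q)_\infty}{(c,q/c,z,q/z;q)_\infty}=\sum_{r=0}^{m-1}z^{r}\,\frac{(q^m,q^m,cq^{r}z^m,q^{m-r}/(cz^m);q^m)_\infty}{(cq^{r},q^{m-r}/c,z^m,q^m/z^m;q^m)_\infty}.$$

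Next I would instantiate $q\mapsto q^{12}$, $m=3$, $z=q$, $c=q^{6}$. The left side then collapses to $(q^{12},q^{12},q^{7},q^{5};q^{12})_\infty/(q^{6},q^{6},q,q^{11};q^{12})_\infty$, which is Lin's stated product multiplied by the nuisance factor $(q^{12};q^{12})_\infty^{2}/(q^{6};q^{12})_\infty^{2}$. Dividing both sides by this factor and absorbing the correction into each summand through the base-$q^{36}$ rewrite
$$\frac{(q^{6};q^{12})_\infty^{2}}{(q^{12};q^{12})_\infty^{2}}=\frac{(q^{6},q^{6},q^{18},q^{18},q^{30},q^{30};q^{36})_\infty}{(q^{12},q^{12},q^{24},q^{24},q^{36},q^{36};q^{36})_\infty}$$
expresses everything uniformly in base $q^{36}$.

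The remaining work is bookkeeping. For $r\in\{0,1,2\}$ I would substitute $cq^{r}=q^{6+12r}$, $q^{m-r}/c=q^{30-12r}$, $cq^{r}z^{m}=q^{9+12r}$, $q^{m-r}/(cz^{m})=q^{27-12r}$, $z^{m}=q^{3}$, $q^{m}/z^{m}=q^{33}$, multiply by the correction, and simplify. For $r=2$ the factors $(q^{3},q^{33};q^{36})_\infty$ appear in both the numerator and denominator of the master summand and cancel, which is why Lin's third summand has a visibly simpler shape than the first two. Reading off the three resulting expressions recovers Lin's 3-dissection term by term. The only obstacle is the organizational one of juggling roughly a dozen Pochhammer factors per summand across the three bases $q$, $q^{12}$, and $q^{36}$; no analytic difficulty arises at any stage.
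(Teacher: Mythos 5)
Your proposal is correct and follows essentially the same route as the paper: setting $b=aq$ in the $_1\psi_1$ to obtain the Lambert-series form, rearranging into residue classes to get the master $p$-dissection (the paper's Proposition 2.1), and then making exactly the specialization $q\to q^{12}$, $p=3$, $z=q$, $a=q^{6}$ (the paper's $t=12$, $s=6$, $r=1$ in its equation (5.4)), followed by the same base-$q^{36}$ normalization. The paper leaves the final bookkeeping as ``a little algebra''; your term-by-term verification, including the cancellation of $(q^{3},q^{33};q^{36})_{\infty}$ in the $r=2$ summand, fills in those details correctly.
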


Iteration of the main identity leads to infinite product identities such as 
\begin{multline}
 \frac{(q^5,q^5;q^5)_{\infty}}
 {(q,q^4;q^5)_{\infty}}\\
  =
 \frac{1}
 {1-q^2}
 \prod_{k=1}^{\infty}
 \left(1+
 q^{2^{k-1}}
 \frac{(q^2,q^{2^k5-2}, q^{2^{k-1}7+2},q^{2^{k-1}3-2};q^{2^k5})_{\infty}}
 {(q^{2^{k-1}5+2},q^{2^{k-1}5-2},q^{2^k+2},q^{2^{k+2}-2};q^{2^k5})_{\infty}}
 \right).
 \end{multline}

After applying similar ideas to Bailey's $_6\psi_6$ summation formula, the following identity is a consequence.

\begin{theorem}
If $|q|<1$ and $m$ is a positive integer, then
\begin{multline}
{\displaystyle
\frac{\left(q,q,a,\frac{q}{a},\frac{b q}{d}, \frac{dq}{b}, \frac{aq}{b d},
\frac{b d q}{a};q\right)_{\infty }}
{\left(b,\frac{q}{b},d,\frac{q}{d},\frac{a}{b},\frac{bq}{a},\frac{a}{d},\frac{dq}{a};q\right)_{\infty }}}\\
   =
\sum _{r=0}^{m-1} q^r
\frac{
\left(q^m,q^m,a q^{2 r},\frac{q^{m-2 r}}{a},\frac{b q^m}{d},\frac{d q^m}{b},
\frac{a q^m}{b d},\frac{b dq^m}{a};q^m\right){}_{\infty }}
{\left(b q^r,\frac{q^{m-r}}{b},d q^r,\frac{q^{m-r}}{d},\frac{a q^r}{b},\frac{b q^{m-r}}{a},\frac{a q^r}{d},
\frac{dq^{m-r}}{a};q^m\right){}_{\infty }}.
\end{multline}
\end{theorem}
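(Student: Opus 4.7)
The plan is to mimic, at the level of Bailey's $_6\psi_6$ summation, the strategy that the paper applies to Ramanujan's $_1\psi_1$. In outline: express the product on the left-hand side as a bilateral Lambert-type series, split the sum over $n\in\mathbb{Z}$ into the $m$ arithmetic progressions modulo $m$ via the general principle in equations \eqref{lam1eqb}--\eqref{lam1eqc}, and then re-sum each of those sub-series by the same series/product identity to obtain the $r$-th product on the right-hand side.

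Concretely, I would start from the partial-fraction form of Bailey's $_6\psi_6$ specialised so that the four principal parameters are $b$, $d$, $a/b$, $a/d$; this is the natural very-well-poised choice, since the pairs $\{b,a/b\}$ and $\{d,a/d\}$ both multiply to $a$. The resulting identity takes the shape
\begin{equation*}
\frac{(q,q,a,\frac{q}{a},\frac{bq}{d},\frac{dq}{b},\frac{aq}{bd},\frac{bdq}{a};q)_{\infty}}{(b,\frac{q}{b},d,\frac{q}{d},\frac{a}{b},\frac{bq}{a},\frac{a}{d},\frac{dq}{a};q)_{\infty}} = C\sum_{n=-\infty}^{\infty}\frac{(1-aq^{2n})\,q^n}{(1-bq^n)(1-dq^n)(1-(a/b)q^n)(1-(a/d)q^n)},
\end{equation*}
for an explicit scalar $C$. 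Now substitute $n=mk+r$ with $k\in\mathbb{Z}$ and $0\le r\le m-1$. Each factor in the summand transforms cleanly: $q^n\mapsto q^r(q^m)^k$, $1-aq^{2n}\mapsto 1-(aq^{2r})(q^m)^{2k}$, and for each $x\in\{b,d,a/b,a/d\}$, $1-xq^n\mapsto 1-(xq^r)(q^m)^k$. Crucially, the very-well-poised pairing is preserved under $(a,b,d,q)\mapsto(aq^{2r},bq^r,dq^r,q^m)$, since for instance $(aq^{2r})/(bq^r)=(a/b)q^r$ and $(aq^{2r})/(dq^r)=(a/d)q^r$. Consequently the $r$-th residue-class sub-series equals $q^r$ times a bilateral sum of exactly the same Bailey type, but now in base $q^m$ and with parameters $(aq^{2r},bq^r,dq^r)$. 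A second application of the same identity, this time in the sum-to-product direction, rewrites that sub-series as the $r$-th product displayed on the right-hand side of the theorem; summing over $r=0,\dots,m-1$ produces the claim.

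The main obstacle I anticipate is pinning down the exact Lambert-series form of Bailey's $_6\psi_6$ that yields the four-factor denominator structure above, and verifying that the accompanying scalar $C$ is invariant under $(a,b,d,q)\mapsto(aq^{2r},bq^r,dq^r,q^m)$, so that it cancels cleanly across the $m$-fold dissection without contaminating the statement. Once the correct specialisation and this invariance are in hand, the remainder of the argument is the same symbolic bookkeeping carried out for the $_1\psi_1$-based results earlier in the paper, with the $m$-dissection structure falling out automatically from the residue-class decomposition of the bilateral sum.
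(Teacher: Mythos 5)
Your proposal is correct and follows essentially the same route as the paper: the author also specialises Bailey's $_6\psi_6$ with $c=a/b$, $e=a/d$, normalises to obtain exactly the Lambert-type bilateral sum you describe (with your scalar $C$ equal to $1$ after the products absorb the factor $\dfrac{1-a}{(1-b)(1-d)(1-a/b)(1-a/d)}$), dissects via $n\to mn+r$, and re-sums each residue class with the replacements $q\to q^m$, $a\to aq^{2r}$, $b\to bq^r$, $d\to dq^r$. The invariance you flag as the main obstacle is automatic in this normalisation, so no further verification is needed.
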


Finally, examples of applying similar ideas to other identities leads to the following somewhat unusual expansions involving the integer parameter $m$.

\begin{corollary}
Let $m$ be a positive integer.\\
(i) If $|q|<1$, then
\begin{multline}
\frac{1}{(aq;q)_{\infty}}\sum_{n=1}^{\infty} \frac{n(-a)^n q^{n(n+1)/2}}{(q;q)_n}\\
= \sum_{r=1}^{m}\frac{1}{(aq^{r};q^m)_{\infty}}\sum_{n=1}^{\infty} \frac{n(-a)^n q^{mn(n-1)/2+nr}}{(q^m;q^m)_n}.
\end{multline}
If $|q|$, $|aq|<1$, then
\begin{equation}
(aq;q)_{\infty}\sum_{n=1}^{\infty} \frac{n a^n q^{n}}{(q;q)_n}
= \sum_{r=1}^{m}(aq^{r};q^m)_{\infty}\sum_{n=1}^{\infty} \frac{na^n q^{n r}}{(q^m;q^m)_n}.
\end{equation}
\end{corollary}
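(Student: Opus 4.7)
The plan is to reduce both sides of each identity to a scalar multiple of the Lambert series $\sum_{n=1}^{\infty}\frac{q^n}{1-aq^n}$ by logarithmic differentiation in $a$ of an appropriate Euler product--series identity, and then to invoke the elementary dissection (\ref{lam1eqb}) applied to $f^{*}(q,q,aq,q)$.

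For part (ii), I would start from the Euler identity $\frac{1}{(aq;q)_{\infty}}=\sum_{n\geq 0}\frac{a^n q^n}{(q;q)_n}$. Differentiating in $a$ and then multiplying through by $a(aq;q)_{\infty}$ converts the left-hand side of (ii) into $a\sum_{k\geq 1}\frac{q^k}{1-aq^k}$. The same manoeuvre applied to $\frac{1}{(aq^r;q^m)_{\infty}}=\sum_{n\geq 0}\frac{a^n q^{nr}}{(q^m;q^m)_n}$ shows that the $r$th summand on the right-hand side of (ii) equals $a\sum_{k\geq 0}\frac{q^{r+mk}}{1-aq^{r+mk}}$. Since the pairs $(r,k)$ with $1\leq r\leq m$ and $k\geq 0$ enumerate each positive integer $n=r+mk$ exactly once, summing on $r$ reassembles $a\sum_{n\geq 1}\frac{q^n}{1-aq^n}$, which is precisely the instance of (\ref{lam1eqb}) applied to the function $f^{*}(q,q,aq,q)$.

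Part (i) follows from the same template, but using Euler's \emph{other} identity $(aq;q)_{\infty}=\sum_{n\geq 0}\frac{(-a)^n q^{n(n+1)/2}}{(q;q)_n}$ together with its base-$q^m$ analogue $(aq^r;q^m)_{\infty}=\sum_{n\geq 0}\frac{(-a)^n q^{mn(n-1)/2+nr}}{(q^m;q^m)_n}$. After differentiating each in $a$, multiplying by $-a$, and dividing by the relevant infinite product, both sides become a signed copy of the same Lambert series, and the dissection argument again produces the match.

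The computation is essentially bookkeeping; the only mild obstacle I anticipate is tracking the sign that arises from $\frac{d}{da}(-a)^n=-n(-a)^{n-1}$, and reconciling the triangular exponent $n(n+1)/2$ in the base-$q$ Euler series with the exponent $mn(n-1)/2+nr$ appearing in the base-$q^m$ version, so that after the substitution $q\mapsto q^m$, $a\mapsto aq^r$ the quadratic powers of $q$ displayed in (i) come out correctly.
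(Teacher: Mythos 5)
Your proposal is correct and follows essentially the same route as the paper: both arguments reduce each side to the Lambert series $a\sum_{n\ge 1}q^n/(1-aq^n)$ and then invoke its elementary dissection into residue classes modulo $m$ (the paper phrases this as $h_i(a,q)=\sum_{r=0}^{m-1}h_i(aq^{-r},q^m)$ for the functions $h_5,h_6$ of Corollary \ref{cobapr2}, followed by the reindexing $r\to m-r$). The only difference is that you re-derive the identities $h_5(a,q)=h_6(a,q)=\sum_{n\ge1}aq^n/(1-aq^n)$ from scratch by logarithmic differentiation of Euler's two series, whereas the paper simply cites these equalities from \cite{McL10}; your version is therefore self-contained but not a genuinely different method.
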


\section{A special case Ramanujan's $_1\psi_1$ summation formula}

 The main tool used is an identity that follows from a special case Ramanujan's $_1\psi_1$ summation formula \eqref{ram1}, after specializing one of the parameters. Observe that the right side of \eqref{prop1} provides the $p$-dissection of the left side, where by ``$p$-dissection'' here we mean in terms of powers of $z$, rather than powers of $q$.

 \begin{proposition}\label{prop1}
 If $p$ is a positive integer and $|q|<1$, then
 \begin{equation}\label{midisszram}
 \frac{(q,q,az,q/(az);q)_{\infty}}{(a,q/a,z,q/z;q)_{\infty}}
 =
 \sum_{j=0}^{p-1}z^j
 \frac{(q^p,q^p,aq^jz^p,q^{p-j}/(az^p);q^p)_{\infty}}
 {(aq^j,q^{p-j}/a,z^p,q^p/z^p;q^p)_{\infty}}.
 \end{equation}
 \end{proposition}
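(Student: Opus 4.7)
The plan is to obtain \eqref{midisszram} from the elementary rearrangement \eqref{lam1eqb} applied to a particular Lambert series whose closed form is a specialization of Ramanujan's $_1\psi_1$ identity.

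First, I would specialize $b=aq$ in \eqref{ram1}. Since $(a;q)_n/(aq;q)_n=(1-a)/(1-aq^n)$ for every $n\in\mathbb{Z}$, and since $(1-a)(aq;q)_\infty=(a;q)_\infty$, dividing both sides of \eqref{ram1} by $1-a$ yields
\begin{equation*}
\sum_{n=-\infty}^{\infty}\frac{z^n}{1-aq^n}
=\frac{(q,q,az,q/(az);q)_{\infty}}{(a,q/a,z,q/z;q)_{\infty}},
\end{equation*}
valid for $|q|<|z|<1$. The right side is exactly the left side of \eqref{midisszram}, so it suffices to dissect the Lambert series on the left.

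Next, I would apply \eqref{lam1eqb} with $m=p$ to this Lambert series (taking the formal parameters of \eqref{lam1eqa} to be $1$, $z$, and $a$). Splitting $n=pn'+r$ and regrouping modulo $p$ gives
\begin{equation*}
\sum_{n=-\infty}^{\infty}\frac{z^n}{1-aq^n}
=\sum_{r=0}^{p-1} z^r \sum_{n=-\infty}^{\infty}\frac{(z^p)^n}{1-(aq^r)(q^p)^n}.
\end{equation*}
Each inner sum is another instance of the evaluated Lambert series, with the triple $(q,z,a)$ replaced by $(q^p,z^p,aq^r)$. Substituting into the closed form and using the identifications $q^p/(aq^r)=q^{p-r}/a$ and $q^p/(aq^rz^p)=q^{p-r}/(az^p)$ reproduces exactly the right side of \eqref{midisszram}.

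The only subtlety is convergence: the outer evaluation needs $|q|<|z|<1$, and each of the $p$ inner evaluations needs $|q^p|<|z^p|<1$, which is automatic. Since both sides of \eqref{midisszram} are meromorphic in $z$ for fixed $|q|<1$, the identity extends by analytic continuation to the full region where each side is defined. The main obstacle is not mathematical difficulty but the clean bookkeeping of the two parameter substitutions; once the Lambert-series closed form is in hand, the dissection is immediate.
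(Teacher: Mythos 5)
Your proof is correct and follows essentially the same route as the paper: specialize $b=aq$ in \eqref{ram1}, divide by $1-a$ to obtain the Lambert series $\sum_{n=-\infty}^{\infty}z^n/(1-aq^n)$, dissect it modulo $p$, re-evaluate each inner sum with $(q,a,z)\to(q^p,aq^j,z^p)$, and finish by analytic continuation. No gaps; the convergence remark about $|q^p|<|z^p|<1$ being automatic is a nice touch that the paper leaves implicit.
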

 \begin{proof}
 Set $b=aq$ in \eqref{ram1} and divide both sides by $1-a$ to get
 {\allowdisplaybreaks
 \begin{align}\label{prop1eq1}
 \frac{(q,q,az,q/(az);q)_{\infty}}
 {(a,q/a,z,q/z;q)_{\infty}}
 &= \sum_{n=-\infty}^{\infty}\frac{z^n}{1-aq^n}\\
 &= \sum_{j=0}^{p-1}\sum_{n=-\infty}^{\infty}\frac{z^{np+j}}{1-aq^{np+j}}\notag \\
 &= \sum_{j=0}^{p-1}z^j\sum_{n=-\infty}^{\infty}\frac{(z^{p})^n}{1-(aq^j)(q^{p})^n}, \notag
 \end{align}}
 and \eqref{midisszram} for $|q|<|z|<1$ follows immediately upon employing the first equality in \eqref{prop1eq1}, with $q$ replaced with $q^p$, $a$ with $aq^r$, and $z$ with $z^p$ to each of the inner sums in the last expression. The full result follows by analytic continuation.
 \end{proof}
 The identity \eqref{midisszram} can be slightly rearranged and iterated to give the following identity.
 \begin{corollary}
 If $m$ and $p$ are positive integers, then
 \begin{multline}\label{midisszram2}
 \frac{(q,q,az,q/(az);q)_{\infty}}
 {(a,q/a,z,q/z;q)_{\infty}}
 =
 \frac{(q^{p^m},q^{p^m},az^{p^m},q^{p^m}/(az^{p^m});q^{p^m})_{\infty}}
 {(a,q^{p^m}/a,z^{p^m},q^{p^m}/z^{p^m};q^{p^m})_{\infty}}\\
 \times
 \prod_{k=1}^{m}
 \left(1+
 \sum_{j=1}^{p-1}z^{jp^{k-1}}
 \frac{(a,q^{p^k}/a,a q^{jp^{k-1}}z^{p^k},q^{p^k-jp^{k-1}}/(az^{p^k});q^{p^k})_{\infty}}
 {(aq^{jp^{k-1}},q^{p^k-jp^{k-1}}/a,az^{p^k},q^{p^k}/(az^{p^k});q^{p^k})_{\infty}}
 \right).
 \end{multline}
 \end{corollary}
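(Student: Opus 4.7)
The plan is to prove the identity by induction on $m$, using Proposition~\ref{prop1} at each step, after first recasting it in a multiplicative form that makes the iteration transparent.

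First I would introduce the shorthand
\[
F(a,z,q) \;:=\; \frac{(q,q,az,q/(az);q)_{\infty}}{(a,q/a,z,q/z;q)_{\infty}},
\]
and observe that the $j=0$ summand on the right side of \eqref{midisszram} is exactly $F(a,z^p,q^p)$. Factoring this out of that summation and simplifying the ratios of the Pochhammer symbols (the common $(q^p,q^p,z^p,q^p/z^p;q^p)_{\infty}$ in numerator and denominator cancel), Proposition~\ref{prop1} takes the multiplicative form
\[
F(a,z,q) \;=\; F(a,z^p,q^p)\,\Biggl(1 + \sum_{j=1}^{p-1} z^{j}\,\frac{(a,q^p/a,\,a q^{j} z^{p},\,q^{p-j}/(a z^{p});q^p)_{\infty}}{(a q^{j},\,q^{p-j}/a,\,a z^{p},\,q^{p}/(a z^{p});q^p)_{\infty}}\Biggr).
\]
This is precisely the $m=1$ case of the claimed identity.

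Next I would iterate. Apply the displayed formula with $(z,q)$ replaced by $(z^{p^{k-1}}, q^{p^{k-1}})$ (and $a$ unchanged), which yields
\[
F(a,z^{p^{k-1}},q^{p^{k-1}}) \;=\; F(a,z^{p^{k}},q^{p^{k}})\cdot B_k,
\]
where $B_k$ is exactly the $k$-th bracketed factor appearing in \eqref{midisszram2}: the exponents on $z$ become $j p^{k-1}$, while the base of the Pochhammer symbols becomes $q^{p^k}$ with shift parameter $q^{j p^{k-1}}$. Multiplying the resulting identities for $k=1,2,\dots,m$ telescopes to
\[
F(a,z,q) \;=\; F(a,z^{p^{m}},q^{p^{m}})\prod_{k=1}^{m} B_k,
\]
which is the assertion.

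Only the substitution step is nontrivial: one must verify that the exponent tracking $(p^{k-1},\,p^{k-1}\cdot p = p^k,\,p^k - j p^{k-1})$ agrees with the indices in the statement, and that no further cancellations are being inadvertently performed. Apart from this bookkeeping, the argument is a direct induction with Proposition~\ref{prop1} supplying the inductive step; there is no genuine analytic obstacle, since the multiplicative recasting and the substitution $(z,q)\mapsto(z^{p^{k-1}},q^{p^{k-1}})$ preserve the convergence domain via the same analytic-continuation argument used in the proof of Proposition~\ref{prop1}.
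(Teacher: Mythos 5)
Your proposal is correct and is essentially the paper's own argument: the multiplicative form you derive by factoring out the $j=0$ term is exactly the paper's intermediate identity \eqref{midisszram3}, and the paper likewise concludes by iterating it with $q\to q^p$, $z\to z^p$ (noting the $j=0$ summand reduces to $1$). Your explicit bookkeeping of the exponents $p^{k-1}$, $p^k$, $p^k-jp^{k-1}$ checks out, so nothing further is needed.
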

 \begin{proof}
 Rewrite \eqref{midisszram} in the form
 \begin{multline}\label{midisszram3}
 \frac{(q,q,az,q/(az);q)_{\infty}}
 {(a,q/a,z,q/z;q)_{\infty}}\\
 =
 \frac{(q^p,q^p,az^p,q^{p}/(az^p);q^p)_{\infty}}
 {(a,q^{p}/a,z^p,q^p/z^p;q^p)_{\infty}}
 \sum_{j=0}^{p-1}z^j
 \frac{(a,q^p/a,aq^jz^p,q^{p-j}/(az^p);q^p)_{\infty}}
 {(aq^j,q^{p-j}/a,az^p,q^p/(az^p);q^p)_{\infty}}.
 \end{multline}
 Now iterate, noting that the infinite product before the sum on the right side of \eqref{midisszram3} has the same form as the infinite product on the left side, with the replacements $q\to q^p$ and $z\to z^p$. Note also that the $j=0$ term in the sum on the right simplifies to 1.
 \end{proof}
 The limit as $m\to \infty$ of the individual terms on the right side of \eqref{midisszram3}  will not exist for all values of $z$, but will for some values of $z$. We next give an example of an identity where these limits do exist,  derived by making the substitutions $q\to q^5$, $z\to q$, $a\to q^2$ in \eqref{midisszram3} and setting $p=2$.
 \begin{example}
 If $m$ is a positive integer, then
 \begin{multline}\label{midisszram4}
 \frac{(q^5,q^5;q^5)_{\infty}}
 {(q,q^4;q^5)_{\infty}}
 =
 \frac{(q^{2^m 5},q^{2^m 5},q^{2^m+2},q^{2^{m+2} -2};q^{2^m 5})_{\infty}}
 {(q^2,q^{2^m 5-2},q^{2^m},q^{2^{m+2}};q^{2^m 5})_{\infty}}\\
 \times
 \prod_{k=1}^{m}
 \left(1+
 q^{2^{k-1}}
 \frac{(q^2,q^{2^k5-2}, q^{2^{k-1}7+2},q^{2^{k-1}3-2};q^{2^k5})_{\infty}}
 {(q^{2^{k-1}5+2},q^{2^{k-1}5-2},q^{2^k+2},q^{2^{k+2}-2};q^{2^k5})_{\infty}}
 \right).
 \end{multline}
 Upon letting $m \to \infty$, it follows that
 \begin{multline}\label{midisszram5}
 \frac{(q^5,q^5;q^5)_{\infty}}
 {(q,q^4;q^5)_{\infty}}\\
  =
 \frac{1}
 {1-q^2}
 \prod_{k=1}^{\infty}
 \left(1+
 q^{2^{k-1}}
 \frac{(q^2,q^{2^k5-2}, q^{2^{k-1}7+2},q^{2^{k-1}3-2};q^{2^k5})_{\infty}}
 {(q^{2^{k-1}5+2},q^{2^{k-1}5-2},q^{2^k+2},q^{2^{k+2}-2};q^{2^k5})_{\infty}}
 \right).
 \end{multline}
 \end{example}

 \section{Extensions of two identities of Ramanujan}

 Before considering vanishing coefficients and $m$-dissections, we consider two other quite general identities, which may be regarded as
 extensions of two identities of Ramanujan, as will be shown below.
 \begin{corollary}\label{1psi1cR}
 Let $m$ be a positive integer, and let $\omega = \exp(2\pi i/m)$ be a primitive $m$ root of unity. If $a,z\not = 0$ and $|q|<1$, then
 \begin{equation}\label{1psi1cReq1}
 \sum_{j=0}^{m-1}
 \frac{\left(q,q,\omega^jaz,\displaystyle{\frac{q}{\omega^jaz}};q\right)_{\infty}}
 {\left(a,\displaystyle{\frac{q}{a}},\omega^j z,\displaystyle{\frac{q}{\omega^j z}};q\right)_{\infty}}
 =
 m\frac{\left(q^m,q^m,az^m,\displaystyle{\frac{q^m}{az^m}};q^m\right)_{\infty}}
 {\left(a,\displaystyle{\frac{q^m}{a}}, z^m,\displaystyle{\frac{q^m}{z^m}};q^m\right)_{\infty}},
 \end{equation}
 \begin{equation}\label{1psi1cReq2}
 \sum_{j=0}^{m-1}\omega^j
 \frac{\left(q,q,\omega^jaz,\displaystyle{\frac{q}{\omega^jaz}};q\right)_{\infty}}
 {\left(\omega^ja,\displaystyle{\frac{q}{\omega^j a}}, z,\displaystyle{\frac{q}{z}};q\right)_{\infty}}
 =
 m a^{m-1}\frac{\left(q^m,q^m,a^m z q^{m-1},\displaystyle{\frac{q}{a^m z}};q^m\right)_{\infty}}
 {\left(a^m,\displaystyle{\frac{q^m}{a^m}}, z q^{m-1},\displaystyle{\frac{q}{z}};q^m\right)_{\infty}}.
 \end{equation}
 \end{corollary}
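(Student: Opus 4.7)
The approach is to reduce both identities to the reformulation of Ramanujan's $_1\psi_1$ summation used in the proof of Proposition~\ref{prop1}, namely
\begin{equation*}
\frac{(q,q,az,q/(az);q)_\infty}{(a,q/a,z,q/z;q)_\infty} \;=\; \sum_{n=-\infty}^{\infty} \frac{z^n}{1-aq^n},
\end{equation*}
valid a priori for $|q|<|z|<1$. Both parts of the corollary then follow by applying this formula, interchanging the finite sum over $j$ with the bilateral sum over $n$, evaluating the resulting inner $j$-sum, and then re-applying the same formula with modified parameters.

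For \eqref{1psi1cReq1}, I apply the identity above to each summand on the left after the substitution $z\mapsto \omega^j z$ (noting that $a$ is the same for all $j$), and interchange the two summations. The inner sum over $j$ is $\sum_{j=0}^{m-1}\omega^{jn}$, which equals $m$ if $m\mid n$ and $0$ otherwise by the standard orthogonality of roots of unity. Reindexing the surviving terms by $n\mapsto mn$ turns the left-hand side into $m\sum_{n}(z^m)^n/(1-a(q^m)^n)$, and a second application of the $_1\psi_1$ formula, with $q\to q^m$ and $z\to z^m$, produces the right-hand side of \eqref{1psi1cReq1}.

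For \eqref{1psi1cReq2}, I instead apply the same formula with $a\mapsto\omega^j a$ (keeping $z$ fixed), multiply by $\omega^j$, and interchange the two summations. The resulting inner sum is
\begin{equation*}
S(x) \;:=\; \sum_{j=0}^{m-1} \frac{\omega^j}{1-\omega^j x}, \qquad x = aq^n.
\end{equation*}
Using the factorization $1-x^m=\prod_{j=0}^{m-1}(1-\omega^j x)$, a short partial-fraction/residue calculation yields the identity of rational functions $S(x)=mx^{m-1}/(1-x^m)$. Substituting this back collapses the double sum into
\begin{equation*}
m\,a^{m-1}\sum_{n=-\infty}^{\infty}\frac{(zq^{m-1})^n}{1-a^m (q^m)^n},
\end{equation*}
and one final application of the $_1\psi_1$ formula, this time with $q\to q^m$, $a\to a^m$ and $z\to zq^{m-1}$, gives the stated right-hand side (noting the simplifications $q^m/(a^m zq^{m-1}) = q/(a^m z)$ and $q^m/(zq^{m-1}) = q/z$).

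The obstacles here are essentially bookkeeping: identifying the correct parameter substitutions for the final $_1\psi_1$ application in each case, and justifying the interchange of the $j$- and $n$-summations, which holds by absolute convergence in a small enough region of the parameters (e.g.\ $|q|<|z|<1$ with $|z|$ close to $1$, so that all $m$ bilateral series converge absolutely at once). The full stated range of $a$ and $z$ then follows by analytic continuation, exactly as at the end of the proof of Proposition~\ref{prop1}.
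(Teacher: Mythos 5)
Your proposal is correct and follows essentially the same route as the paper: both parts are obtained by specializing the $_1\psi_1$-derived expansion $\sum_n z^n/(1-aq^n)$, averaging over the $m$-th roots of unity, evaluating the inner $j$-sum (orthogonality for \eqref{1psi1cReq1}, and the rational identity $\sum_{j}\omega^j/(1-\omega^j x)=mx^{m-1}/(1-x^m)$ for \eqref{1psi1cReq2}), and re-applying the same expansion with the shifted parameters, finishing by analytic continuation. The only cosmetic difference is that the paper verifies the inner sum in \eqref{1psi1cReq2} by a logarithmic-derivative computation on $\prod_j(1-\omega^j a q^n)=1-a^mq^{mn}$ rather than by partial fractions.
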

\begin{proof}
For \eqref{1psi1cReq1}, take the first equality \eqref{prop1eq1} and for each $j$, $0\leq j\leq m-1$, replace $z$ with $\omega^j z$. Add the resulting $m$ identities together, so that the resulting left side is the left side of \eqref{1psi1cReq1}, and the right side becomes
\[
\sum_{n=-\infty}^{\infty}\frac{z^n}{1-aq^n}\sum_{j=0}^{m-1}(\omega^n)^j.
\]
That this equals the right side of \eqref{1psi1cReq1} follows from the fact that
\[
\sum_{j=0}^{m-1}(\omega^n)^j
=
\begin{cases}
m, & m|n,\\
0,&\text{otherwise}.
\end{cases}
\]
This proves \eqref{1psi1cReq1} for $|q|<|z|<1$, and the full result follows by analytic continuation.

For \eqref{1psi1cReq2}, similarly take the first equality \eqref{prop1eq1} and for each $j$, $0\leq j\leq m-1$, replace $a$ with $\omega^j a$. Multiply the $j$-th resulting identity by $\omega^j$ and add all $m$ identities together, so that the resulting left side is the left side of \eqref{1psi1cReq2}, and the right side becomes
\[
\sum_{n=-\infty}^{\infty}z^n\sum_{j=0}^{m-1}\frac{\omega^j}{1-\omega^jaq^n}.
\]
Now,
\begin{align*}
\sum_{j=0}^{m-1}\frac{\omega^j}{1-\omega^jaq^n}
&=-\sum_{j=0}^{m-1}\frac{1}{q^n}\frac{d}{da}\ln(1-\omega^jaq^n)\\
&=-\frac{1}{q^n}\frac{d }{da}\ln\left(\prod_{j=0}^{m-1}(1-\omega^jaq^n)\right)\\
&=-\frac{1}{q^n}\frac{d }{da}\ln\left(1-a^mq^{mn}\right)\\
&=\frac{ma^{m-1}q^{(m-1)n}}{1-a^mq^{mn}}.
\end{align*}
Hence the right side is
\[
ma^{m-1}\sum_{n=-\infty}^{\infty}\frac{(zq^{m-1})^n}{1-a^m(q^{m})^n},
\]
and \eqref{1psi1cReq2} follows  for $|q|<|z|<1$ after one further application of \eqref{prop1eq1} (with $q$ replaced with $q^m$, $a$ replaced with $a^m$, and $z$ replaced with $z q^{m-1}$). Once again,  the full result follows by analytic continuation.
\end{proof}

Note that if  $z$ is replaced with $z/a$ in the $m=2$ cases of \eqref{1psi1cReq1}  and  \eqref{1psi1cReq2}, then after some elementary $q$-product manipulations the following pair of identities appear:
 \begin{multline}\label{1psi1c1eq1}
 \left(z,\frac{q}{z},-a,-\frac{q}{a},q,q;q\right)_{\infty}
 +
 \left(-z,-\frac{q}{z},a,\frac{q}{a},q,q;q\right)_{\infty}\\
 =
 2\left(\frac{zq}{a},\frac{aq}{z},az,\frac{q^2}{az},q^2,q^2;q^2\right)_{\infty},
 \end{multline}
 \begin{multline}\label{1psi1c1eq2}
 \left(z,\frac{q}{z},-a,-\frac{q}{a},q,q;q\right)_{\infty}
 -
 \left(-z,-\frac{q}{z},a,\frac{q}{a},q,q;q\right)_{\infty}\\
 =
 2a
 \left(\frac{z}{a},\frac{aq^2}{z},azq,\frac{q}{az},q^2,q^2;q^2\right)_{\infty},
 \end{multline}
These are the two identities in \textbf{Entry 29} in chapter 16 of Ramanujan's second notebook (see \cite[p. 45]{B91}), so that in a sense \eqref{1psi1cReq1}  and  \eqref{1psi1cReq2} may be regarded as extensions of Ramanujan's two identities.

\section{Vanishing Coefficients}

 As remarked above, the right side of \eqref{midisszram}  provides an $p$-dissection of the left side in powers of $z$, in that each term in the sum on the right side contains powers of $z$ lying in a single arithmetic progression modulo $p$. For the purposes of giving simple proofs of some of the results stated in the introduction, the parameter $a$ is now specialized so that the coefficients of all powers of $z$ in one arithmetic progression vanish.

\begin{corollary}\label{c1}
Let $s$ be an integer, $0\leq s \leq p-1$. If the sequence $\{c_n(q)\}_{n=-\infty}^{\infty}$ is defined by
\begin{equation}\label{c1eq}
\frac{(q,q,q^{p-s}z^{1-p},q^{s+1 - p} z^{p-1};q)_{\infty}}
 {(q^{p - s} z^{-p},q^{s+1 - p} z^p,z,q/z;q)_{\infty}}
 = \sum_{n=-\infty}^{\infty}c_n(q)z^n.
\end{equation}
Then $c_{pn+s}(q)=0$.
\end{corollary}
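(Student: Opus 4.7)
The plan is to apply Proposition~\ref{prop1} with the parameter $a$ specialized to $a = q^{p-s}z^{-p}$. First I would verify that under this substitution the left-hand side of \eqref{midisszram} matches the left-hand side of \eqref{c1eq} exactly: one checks $az = q^{p-s}z^{1-p}$ and $q/(az) = q^{s+1-p}z^{p-1}$, and likewise $a = q^{p-s}z^{-p}$ and $q/a = q^{s+1-p}z^{p}$ reproduce the two remaining factors in the denominator of \eqref{c1eq}.

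Next I would substitute the same value of $a$ into the right-hand side of \eqref{midisszram} and simplify term by term. The arguments collapse as $aq^{j}z^{p} = q^{p-s+j}$ and $q^{p-j}/(az^{p}) = q^{s-j}$, so the numerator of the $j$-th summand becomes $(q^p,q^p,q^{p-s+j},q^{s-j};q^p)_\infty$. For the critical value $j = s$ the last argument is $q^0 = 1$, so the numerator contains $(1;q^p)_\infty$, whose first factor is $1 - 1 = 0$. The denominator of the $j = s$ term (which works out to $(q^{p}z^{-p},z^{p},z^{p},q^{p}/z^{p};q^{p})_\infty$) remains finite for generic $z$, so the entire $j = s$ summand is identically zero.

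Finally, for each remaining $j \neq s$, the summand is $z^{j}$ multiplied by a function whose $z$-dependence enters only through $z^{p}$ and $z^{-p}$. Consequently its Laurent expansion in $z$ involves only powers $z^{n}$ with $n \equiv j \pmod{p}$. Since none of the surviving values of $j$ satisfy $j \equiv s \pmod{p}$, the coefficient of $z^{pn+s}$ in the full expansion is zero, which is precisely the assertion $c_{pn+s}(q) = 0$.

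The only slightly delicate point is that the substitution $a = q^{p-s}z^{-p}$ makes $a$ depend on $z$; however, \eqref{midisszram} is an identity of meromorphic functions in $(a,z)$, so this substitution is legitimate, and one needs only to note that the vanishing factor $(1;q^p)_\infty$ in the $j = s$ term is not cancelled by any pole coming from its denominator, which is clear from the explicit form written above.
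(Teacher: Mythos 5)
Your proof is correct and is essentially identical to the paper's: both set $a=q^{p-s}z^{-p}$ in \eqref{midisszram}, observe that the $j$-th summand becomes $z^j(q^p,q^p,q^{p-s+j},q^{s-j};q^p)_{\infty}/(q^{p-s+j}z^{-p},q^{s-j}z^p,z^p,q^p/z^p;q^p)_{\infty}$, and note that the $j=s$ term vanishes because of the factor $(q^{s-j};q^p)_\infty=(1;q^p)_\infty=0$. Your additional remarks (that each surviving summand contributes only powers $z^n$ with $n\equiv j\pmod p$, and that the vanishing factor is not cancelled by the denominator) are correct and merely make explicit what the paper leaves implicit.
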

\begin{proof}
Set $a = q^{p-s} z^{-p}$ in \eqref{midisszram}, so that the left side of \eqref{midisszram} becomes the left side of \eqref{c1eq}. The $j$-th term, for $0\leq j \leq p-1$, on the right side of \eqref{midisszram} becomes
\[
z^j
 \frac{(q^p,q^p,q^{p-s+j},q^{s-j};q^p)_{\infty}}
 {(q^{p-s+j}z^{-p},q^{s-j}z^p,z^p,q^p/z^p;q^p)_{\infty}},
\]
which vanishes when $j=s$, giving the result.
\end{proof}

There are a number of results in the literature about coefficients in the series expansion of various infinite $q$-products vanishing in certain arithmetic progressions, and also some results giving the $m$-dissection of various infinite products for some small positive integers $m>1$ (more details of some of these results are given below). These results have been proved in a number of ways, often involving various $q$-series identities and manipulation of infinite series, and in some cases using expansions of Hardy-Ramanujan-Rademacher type developed from the infinite products. In the present paper it is shown that many of these follow automatically from various forms of \eqref{midisszram} (such as \eqref{c1eq}), upon simply specializing some of the parameters (without any need for further manipulation of series or appeal to other $q$-series identities).

We first review some of the history of the results on vanishing coefficients (previous results on $m$-dissections will be described later, when discussing the results in the present paper on the topic).

In \cite{RS78},
Richmond and Szekeres showed that if the sequence $\{c_m\}$ is defined by
\begin{equation}\label{mod81}
F(q):=\frac{(q^3,q^5;q^8)_{\infty}}{(q,q^7;q^8)_{\infty}}=:\sum_{m=0}^{\infty}c_m q^m,
\end{equation}
then $c_{4n+3}$ is always zero.
Likewise, they showed that $d_{4n+2}$ is zero for all $n$, if the sequence $\{d_m\}$ is defined by
\begin{equation*}
\frac{1}{F(q)}=:\sum_{m=0}^{\infty}d_m q^m.
\end{equation*}
Their  results were derived  from Hardy-Ramanujan-Rademacher expansions they developed of the infinite products. They also conjectured that if
\begin{equation*}
G(q):=\frac{(q^5,q^7;q^{12})_{\infty}}{(q,q^{11};q^{12})_{\infty}}=:\sum_{m=0}^{\infty}a_m q^m,
\end{equation*}
then $a_{6n+5}$ is always zero, and if
\begin{equation*}
\frac{1}{G(q)}=:\sum_{m=0}^{\infty}b_m q^m,
\end{equation*}
then $b_{6n+3}$ is always zero.

In \cite{AB79}, Andrews and Bressoud proved the following theorem, from which the results of Richmond and Szekeres followed as special cases.
\begin{theorem}\label{t1}
If $1\leq r <k$ are relatively prime integers of opposite parity and
\begin{equation}\label{eq1}
\frac{(q^r,q^{2k-r};q^{2k})_{\infty}}{(q^{k-r},q^{k+r};q^{2k})_{\infty}}=:\sum_{n=0}^{\infty} \phi_nq^n,
\end{equation}
then $\phi_{kn+r(k-r+1)/2}$ is always zero.
\end{theorem}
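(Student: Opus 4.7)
The plan is to derive Theorem \ref{t1} as a direct specialization of Corollary \ref{c1}, where the parameters $p$, $s$, $q$ and $z$ are chosen so that the left side of \eqref{c1eq} becomes the Andrews--Bressoud product (up to a harmless extra factor $A(q)$), and the vanishing $c_{pn+s}(q)=0$ built into that corollary translates into the stated vanishing of $\phi_{kn+r(k-r+1)/2}$.

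Specifically, in \eqref{c1eq} I would replace $q$ by $q^{2k}$, replace $z$ by $q^{k-r}$, and take $p=k$ together with $s=(k+r-1)/2$. The opposite-parity hypothesis on $k,r$ is exactly what makes $s$ an integer, and $1\le r<k$ ensures that $0\le s\le p-1$. A direct computation then shows that the six ``extra'' factors on the left of \eqref{c1eq} collapse, yielding
\[
\frac{(q^{2k},q^{2k};q^{2k})_{\infty}}{(q^k,q^k;q^{2k})_{\infty}}\cdot\frac{(q^r,q^{2k-r};q^{2k})_{\infty}}{(q^{k-r},q^{k+r};q^{2k})_{\infty}}=\sum_{n=-\infty}^{\infty}c_n(q^{2k})\,q^{n(k-r)},
\]
where $c_{kn+(k+r-1)/2}(q^{2k})=0$ for every integer $n$. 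Denote the prefactor by $A(q)$ and the Andrews--Bressoud product by $P(q)$.

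Next I would analyse the coefficient of $q^N$ on the right-hand side. Using the hypothesis $\gcd(r,k)=1$ together with the parity condition (which makes $k-r$ odd and hence $\gcd(k-r,2k)=1$), each residue class of $N$ modulo $2k$ is attained by a unique residue class of $n$ modulo $2k$. The two forbidden classes $n\equiv(k+r-1)/2\pmod k$, i.e.\ $n\in\{(k+r-1)/2,\,(k+r-1)/2+k\}\pmod{2k}$, then correspond (after computing $n(k-r)\pmod{2k}$ and reducing mod $k$) to exactly those $N$ with $N\equiv r(k-r+1)/2\pmod k$. Hence $[q^N]\,A(q)P(q)=0$ whenever $N\equiv r(k-r+1)/2\pmod k$.

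Finally, since $A(q)$ is a power series in $q^k$ with $A(0)=1$, so is $A(q)^{-1}$; multiplying $A(q)P(q)$ by $A(q)^{-1}$ therefore preserves the property that every coefficient in a fixed residue class modulo $k$ vanishes. This transfers the vanishing from $A(q)P(q)$ to $P(q)$, completing the proof. The main obstacle is the bookkeeping in the second paragraph: one must verify that under the chosen substitutions the factors $(q^{2k})^{p-s}z^{1-p}$, $(q^{2k})^{s+1-p}z^{p-1}$, $(q^{2k})^{p-s}z^{-p}$, $(q^{2k})^{s+1-p}z^p$ collapse to $q^{2k-r}$, $q^r$, $q^k$, $q^k$ respectively, and then carry out the congruence computation that aligns the two forbidden $n$-classes with the single target congruence $N\equiv r(k-r+1)/2\pmod k$. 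Once that is in hand, the rest is automatic.
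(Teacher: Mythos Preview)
Your proposal is correct and follows essentially the same route as the paper: the paper does not prove Theorem~\ref{t1} separately but derives it as the $m=2$ case of Theorem~\ref{t2n}, whose proof (Corollary~\ref{c2}) is exactly your specialization of Corollary~\ref{c1}/Proposition~\ref{prop1} with $q\to q^{2k}$, $p=k$, $z=q^{k-r}$, and $s=(k+r-1)/2$. The only cosmetic difference is that the paper reads off the residue-class vanishing directly from the explicit $k$-term sum (the analogue of \eqref{c2eq1}), where each summand is visibly $q^{j(k-r)}$ times a series in $q^k$, rather than arguing through the Laurent coefficients $c_n(q^{2k})$ as you do; both viewpoints encode the same decomposition.
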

Andrews and Bressoud used Ramanujan's $_1\psi_1$ summation formula \eqref{ram1}
to derive the result in Theorem \ref{t1},
after replacing $q$ with $q^{2k}$, specializing $a$, $b$ and $z$ and employing some $q$-series manipulations.
The  two results proved by Richmond and Szekeres, and the two results conjectured by them, follow from the
cases $(k,r)=(4,3),\,(4,1),\, (6,5)$ and $(6,1)$, respectively.

Alladi and Gordon \cite{AG94} prove a  generalization of Theorem \ref{t1}.
\begin{theorem}\label{AGt1}
Let $1<m<k$ and let $(s, km)=1$ with $1\leq s < mk$. Let $r^{*}=(k-1)s$ and $r\equiv r^{*} (\mod m k)$, with $1\leq r<mk$. \\
Put $r'=\lceil \frac{r^{*}}{mk}\rceil (\mod k)$ with $1 \leq r'<k$. Write
\[
\frac{(q^{r},q^{mk-r};q^{mk})_{\infty}}{(q^{s},q^{mk-s};q^{mk})_{\infty}}
=\sum_{n=0}^{\infty}a_n q^n.
\]
Then $a_n=0$ for $n \equiv rr'(\mod k)$.
\end{theorem}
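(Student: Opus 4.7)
The plan is to deduce Theorem~\ref{AGt1} from Corollary~\ref{c1} by a single specialization, after which the vanishing of one summand on the right-hand side of the dissection identity gives exactly the desired arithmetic progression.

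In Corollary~\ref{c1} I would replace $q$ by $q^{mk}$, set $p=k$, substitute $z=q^{s}$ (where $s$ is the $s$ of Theorem~\ref{AGt1}), and choose the integer parameter of Corollary~\ref{c1}---call it $s_{c}$ to avoid clashing with $s$---to be $s_{c}=k-r'$. Writing $A=mk\,r'-sk$ and $B=mk-A$, the left-hand side of Corollary~\ref{c1} becomes
\[
\frac{(q^{mk},q^{mk},q^{A+s},q^{B-s};q^{mk})_{\infty}}{(q^{A},q^{B},q^{s},q^{mk-s};q^{mk})_{\infty}}.
\]
The crux is the \emph{exact} integer identities $A+s=mk-r$ and $B-s=r$. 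These follow from the bound $r^{\ast}=(k-1)s<mk(k-1)$, which forces $\ell:=\lfloor r^{\ast}/(mk)\rfloor\le k-2$, so $r'=\ell+1$ with no modular reduction needed. A short calculation then gives $A+s=mk\,r'-r^{\ast}=mk\,r'-(r+mk\,\ell)=mk-r$, and by symmetry $B-s=r$. Consequently the left-hand side factors as
\[
\frac{(q^{r},q^{mk-r};q^{mk})_{\infty}}{(q^{s},q^{mk-s};q^{mk})_{\infty}}\cdot C(q),\qquad C(q):=\frac{(q^{mk};q^{mk})_{\infty}^{2}}{(q^{A},q^{B};q^{mk})_{\infty}}.
\]

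Next I would locate the vanishing on the right-hand side of the dissection. After the substitution $z=q^{s}$, the $j$-th summand is $q^{sj}G_{j}$; every Pochhammer factor $(q^{X};q^{mk^{2}})_{\infty}^{\pm 1}$ appearing in $G_{j}$ has exponent $X$ divisible by $k$, so the $j$-th summand contributes only to $q$-exponents $\equiv sj\pmod{k}$. Since $\gcd(s,k)=1$, the residues $sj\pmod{k}$ for $0\le j\le k-1$ are all distinct, and the $j=s_{c}$ summand vanishes by Corollary~\ref{c1}. Thus the left-hand side has no term at any exponent $\equiv s\,s_{c}\pmod{k}$, and using $r\equiv(k-1)s\equiv -s\pmod{k}$ one computes
\[
s\,s_{c}=s(k-r')\equiv -sr'\equiv rr'\pmod{k},
\]
which is precisely the arithmetic progression predicted by Theorem~\ref{AGt1}.

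Finally I would transfer the vanishing to the target product. Because $A$, $B$, and $mk$ are all multiples of $k$, both $C(q)$ and its inverse $C(q)^{-1}$ are Laurent series in $q$ whose supports lie entirely in $k\mathbb{Z}$, with nonzero leading coefficients $\pm 1$; in particular $C(q)^{-1}$ is a unit in the ring of such Laurent series. Writing the target as (LHS)$\cdot C(q)^{-1}$ and convolving, the coefficient of $q^{n}$ in the target is a finite sum of products of LHS coefficients at exponents $\equiv n\pmod{k}$. For $n\equiv rr'\pmod{k}$ all such LHS coefficients vanish by the previous step, and hence $a_{n}=0$. The main obstacle is the exact equality $A+s=mk-r$---a mere congruence modulo $mk$ would not identify the Pochhammer factors on the nose---and this is precisely the step where the combinatorial definition of $r'$ via the ceiling enters; a secondary technical point is that $A$ may be negative when $s$ is large relative to $mr'$, forcing one to work with Laurent rather than power series, but since exponents still lie in $k\mathbb{Z}$ the arithmetic-progression transfer survives unchanged.
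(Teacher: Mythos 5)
Your proof is correct, and the checks you flag as the crux are exactly the right ones. One point of orientation first: the paper itself does not prove Theorem~\ref{AGt1} --- it is quoted from Alladi and Gordon as background, and the only vanishing-coefficient results the paper actually derives from Corollary~\ref{c1} are Theorems~\ref{t2n} and~\ref{t3n} (in Corollary~\ref{c2}). So there is no paper proof to match against; what you have done is carry out, for the Alladi--Gordon theorem, the same specialization scheme the paper applies to its own theorems. The two specializations are genuinely different: the paper's Corollary~\ref{c2} takes $z=q^{mk-r}$ and uses the digit $s$ from $r=sm+t$ as the Corollary~\ref{c1} parameter, whereas you take $z=q^{s}$ (the Alladi--Gordon $s$) and the parameter $k-r'$, and the burden shifts to the exact evaluation $A+s=mk-r$. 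Your verification of that identity is sound: since $r^{*}=(k-1)s<(k-1)mk$ and $mk\nmid r^{*}$ (because $\gcd(s,mk)=1$ and $k-1<mk$), indeed $r'=\lfloor r^{*}/(mk)\rfloor+1\le k-1$ with no reduction mod $k$ needed, and then $mkr'-r^{*}=mk-r$ exactly. The remaining structural points --- all exponents in the $j$-th summand lie in $k\mathbb{Z}$ because $A=k(mr'-s)$, the residues $sj\bmod k$ are distinct since $\gcd(s,k)=1$, the $j=k-r'$ term vanishes because its numerator acquires the factor $(1;q^{mk^{2}})_{\infty}$, and the transfer through the unit $C(q)^{-1}\in\mathbb{Z}((q^{k}))$ --- are all correct, and you are right that $A$ can be negative (e.g.\ $k=3$, $m=2$, $s=5$ gives $A=-3$), so the Laurent-series caveat is not cosmetic. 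This is a worthwhile addition: it substantiates the paper's assertion that the Alladi--Gordon theorem ``follows automatically'' from \eqref{midisszram}, an assertion the paper makes but does not write out.
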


Alladi and Gordon \cite{AG94}  also prove a companion result to that in their Theorem \ref{AGt1}.
\begin{theorem}\label{AGt2}
Let $m$, $k$, $s$, $r^{*}$, $r$, $r'$ be as in Theorem \ref{AGt2} with $k$ odd. Write
\[
\frac{(q^{r},q^{mk-r};q^{mk})_{\infty}}{(-q^{s},-q^{mk-s};q^{mk})_{\infty}}
=\sum_{n=0}^{\infty}a'_n q^n.
\]
Then $a'_n=0$ for $n \equiv rr'(\mod k)$.
\end{theorem}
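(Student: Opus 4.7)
The plan is to prove Theorem \ref{AGt2} via a signed analogue of Corollary \ref{c1}, mirroring the strategy used for Theorem \ref{AGt1}. First, apply the joint substitutions $a \to -a$ and $z \to -z$ to Proposition \ref{prop1}. Taking $p = k$ odd (permissible because Theorem \ref{AGt2} assumes $k$ odd) makes $(-z)^p = -z^p$ and yields
\[
\frac{(q,q,az,q/(az);q)_\infty}{(-a,-q/a,-z,-q/z;q)_\infty}
= \sum_{j=0}^{p-1}(-1)^j z^j \frac{(q^p,q^p,aq^j z^p,q^{p-j}/(az^p);q^p)_\infty}{(-aq^j,-q^{p-j}/a,-z^p,-q^p/z^p;q^p)_\infty}.
\]
Choosing $a$ so that $aq^{s'}z^p = 1$ for a designated integer $s'$ forces the factor $(1;q^p)_\infty = 0$ into the numerator of the $j\equiv s'\pmod p$ summand and kills it.

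Next, make the substitutions $q \mapsto q^{mk}$, $p = k$, $z = q^s$, and take $s' = k - r'$ (equivalently $s' \equiv -r' \pmod k$). A direct calculation using the Alladi--Gordon bookkeeping $r^* = (k-1)s = \alpha mk + r$ with $\alpha + 1 \equiv r' \pmod k$ collapses the left side of the resulting identity to
\[
\mathcal P(q) \cdot \frac{(q^r,q^{mk-r};q^{mk})_\infty}{(-q^s,-q^{mk-s};q^{mk})_\infty}, \qquad
\mathcal P(q) := \frac{(q^{mk};q^{mk})_\infty^2}{(-q^{r+s},-q^{mk-r-s};q^{mk})_\infty}.
\]
Meanwhile, each $j$-th summand on the right becomes $(-1)^j q^{sj}$ times a ratio of Pochhammer factors $(\pm q^\alpha;q^{mk^2})_\infty^{\pm 1}$ in which every $\alpha$ is a multiple of $k$, so the $j$-th summand lives entirely in the residue class $sj \pmod k$.

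Since $\gcd(s,k) = 1$, the map $j \mapsto sj\bmod k$ is a bijection on $\mathbb{Z}/k\mathbb{Z}$, so the missing $j = s'$ summand leaves the right side (and hence the left side) with no contribution in the residue class $ss' \pmod k$. Moreover $\mathcal P(q)$ is itself supported on multiples of $k$, because $r+s \equiv (k-1)s+s \equiv 0 \pmod k$ forces every exponent appearing in every factor of $\mathcal P$ to be a multiple of $k$; since $\mathcal P(q)$ is invertible as a formal power series, dividing through preserves residue classes modulo $k$. Therefore $(q^r,q^{mk-r};q^{mk})_\infty/(-q^s,-q^{mk-s};q^{mk})_\infty$ has no $q^n$ term with $n \equiv ss' \pmod k$. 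Using $r \equiv -s \pmod k$ (which follows from $r \equiv (k-1)s \pmod{mk}$) and $s' \equiv -r' \pmod k$, the residue class $ss'$ is precisely $-sr' \equiv rr' \pmod k$, which matches the vanishing $a'_n = 0$ for $n \equiv rr' \pmod k$ claimed in Theorem \ref{AGt2}.

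The main technical obstacle is the parameter bookkeeping: verifying that $s' = k - r'$ is exactly the choice that collapses the numerator factors $q^{-mks'+s(1-k)}$ and $q^{mk(s'+1)+s(k-1)}$ on the left side to $q^{mk-r}$ and $q^r$, and that the remaining denominator contributions assemble into a prefactor supported on multiples of $k$. Both simplifications are forced by the defining relations $r^* = (k-1)s$, $r \equiv r^* \pmod{mk}$, and $r' \equiv \alpha+1 \pmod k$, so the Alladi--Gordon parameters are precisely calibrated to make the specialization go through; the subtle point is checking this carefully.
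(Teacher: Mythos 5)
The paper does not actually prove Theorem \ref{AGt2}: it is quoted from Alladi--Gordon \cite{AG94} as background, and the paper instead proves its own closely related Theorems \ref{t2n} and \ref{t3n} via Corollary \ref{c1} (itself Proposition \ref{prop1} with $a=q^{p-s}z^{-p}$), handling the signed denominator case of Theorem \ref{t3n} by the single remark ``set $z=-q^{mk-r}$.'' Your proposal is, in substance, exactly that method transported to the Alladi--Gordon parametrization: negating $a$ and $z$ with $p=k$ odd is equivalent to the paper's choice of a negative $z$ in Corollary \ref{c1}, your normalization $aq^{s'}z^p=1$ differs from the paper's $az^p=q^{p-s}$ only by a power of $q^p$ (you kill the factor $aq^jz^p$ at $j=s'$ where the paper kills $q^{p-j}/(az^p)$ at $j=s$), and the residue-class bookkeeping $ss'\equiv -sr'\equiv rr'\pmod k$ checks out since $r\equiv(k-1)s\equiv -s$ and $r'=\alpha+1$ exactly (because $r^*=(k-1)s<(k-1)mk$ forces $0\le\alpha\le k-2$). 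The one point you wave at rather than carry out is also the one place something could silently go wrong: with your choice of $a$ the factors $az$, $q^{mk}/(az)$, $-a$ and $-q^{mk}/a$ on the left, and $-aq^{mkj}$ on the right for $j<s'$, all have negative exponents, so each Pochhammer symbol must be rewritten via $1\mp q^{-c}=\mp q^{-c}(1\mp q^{c})$ before one can speak of support modulo $k$; doing so, one finds $(az,q^{mk}/(az);q^{mk})_\infty=-q^{-k(r+m\binom{k}{2})}(q^{r},q^{mk-r};q^{mk})_\infty$ and similarly for the denominator, so the stray signs and monomials are all powers of $q^{k}$ and the argument survives intact. So: correct, and the same approach the paper uses for its analogous theorems, with that normalization step being the only detail you should make explicit.
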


In \cite{McL15}, the present author proved the following results.

\begin{theorem}\label{t2n}
Let $k>1$, $m>1$ be positive integers. Let $r=sm+t$, for some integers $s$ and $t$, where $0\leq s<k$, $1\leq t <m$ and $r$ and $k$ are relatively prime. Let
\begin{equation}\label{eq2n}
\frac{(q^{r-tk},q^{mk-(r-tk)};q^{mk})_{\infty}}{(q^{r},q^{mk-r};q^{mk})_{\infty}}=:\sum_{n=0}^{\infty} c_nq^n,
\end{equation}
then $c_{kn-rs}$ is always zero.
\end{theorem}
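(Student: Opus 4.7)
The plan is to specialize Proposition \ref{prop1} so that one term in the resulting $z$-dissection vanishes, then translate that vanishing into the $q$-coefficient statement for
\[
P(q):=\frac{(q^{r-tk},q^{mk-(r-tk)};q^{mk})_\infty}{(q^r,q^{mk-r};q^{mk})_\infty}.
\]

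I would apply Proposition \ref{prop1} with $q$ replaced by $q^{mk}$, $p=k$, $a=q^{-tk}$, and $z=q^r$. Using $r=sm+t$, short computations simplify the left side to
\[
L(q)=\frac{(q^{mk},q^{mk},q^{r-tk},q^{mk-(r-tk)};q^{mk})_\infty}{(q^{-tk},q^{mk+tk},q^r,q^{mk-r};q^{mk})_\infty},
\]
and factoring $(q^{-tk};q^{mk})_\infty=(1-q^{-tk})(q^{mk-tk};q^{mk})_\infty$ together with $(q^{tk};q^{mk})_\infty=(1-q^{tk})(q^{mk+tk};q^{mk})_\infty$ yields $L(q)=M(q)\,P(q)$, where
\[
M(q)=\frac{-q^{tk}(q^{mk};q^{mk})_\infty^2}{(q^{tk},q^{mk-tk};q^{mk})_\infty}
\]
is a nonzero Laurent series whose exponents all lie in $k\mathbb{Z}$.

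On the right side of Proposition \ref{prop1}, the $j$-th summand simplifies to
\[
T_j(q)=q^{rj}\,\frac{(q^{mk^2},q^{mk^2},q^{(j+s)mk},q^{(k-j-s)mk};q^{mk^2})_\infty}{(q^{jmk-tk},q^{(k-j)mk+tk},q^{rk},q^{(mk-r)k};q^{mk^2})_\infty},
\]
so $T_j(q)=q^{rj}R_j(q^{mk})$ for some (Laurent) series $R_j$, and every $q$-exponent appearing in $T_j$ is congruent to $rj\pmod k$. At the unique $j_0\in\{0,\ldots,k-1\}$ with $j_0\equiv-s\pmod k$ (so $j_0=k-s$ when $s\geq 1$, and $j_0=0$ when $s=0$), one of the numerator factors of $T_{j_0}$ degenerates to $(1;q^{mk^2})_\infty=0$, whence $T_{j_0}(q)=0$. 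Since $\gcd(r,k)=1$, as $j$ runs through $\{0,\ldots,k-1\}\setminus\{j_0\}$ the residues $rj\bmod k$ exhaust every class modulo $k$ other than $rj_0\equiv-rs\pmod k$. Therefore $[q^N]L(q)=0$ for every integer $N\equiv-rs\pmod k$.

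Finally, decompose $P(q)=\sum_{i=0}^{k-1}P^{(i)}(q)$ by residue class of the exponent modulo $k$. Since $M(q)$ is supported on exponents in $k\mathbb{Z}$, multiplication by $M(q)$ preserves this grading, so the part of $L(q)=M(q)P(q)$ lying in residue class $-rs\pmod k$ equals $M(q)\,P^{(-rs)}(q)$. By the previous step this product vanishes, and as $M(q)$ is nonzero in the field of formal Laurent series in $q$, we conclude $P^{(-rs)}(q)=0$; that is, $c_{kn-rs}=0$ for every integer $n$. The main work, and the mild obstacle, is checking the simplification $L(q)=M(q)P(q)$ and identifying the reduced form of $T_j(q)$; once those bookkeeping steps are done, the residue-class conclusion is formal.
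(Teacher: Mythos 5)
Your argument is correct and is essentially the paper's own proof: the paper specializes Proposition \ref{prop1} via Corollary \ref{c1} with $q\to q^{mk}$, $p=k$, $z=q^{mk-r}$ (equivalently $a=q^{tk}$), so that the $j=s$ term of the $z$-dissection vanishes, while you use the mirror choice $a=q^{-tk}$, $z=q^{r}$ and kill the term with $j\equiv -s\pmod{k}$; the extra bookkeeping factor $-q^{tk}$ in your $M(q)$ is the only difference, and the residue-class conclusion is identical. One small slip: $T_j(q)$ is $q^{rj}$ times a series in $q^{k}$, not in $q^{mk}$ (the exponent $jmk-tk$, for instance, is only a multiple of $k$), but since you only use that all exponents of $T_j$ are congruent to $rj$ modulo $k$, this does not affect the argument.
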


\begin{theorem}\label{t3n}
Let $k>1$, $m>1$ be positive integers, with $k$ odd. Let $r=sm+t$, for some integers $s$ and $t$, where $0\leq s<k$, $1\leq t <m$ and $r$ and $k$ are relatively prime. Let
\begin{equation}\label{eq3n}
\frac{(q^{r-tk},q^{mk-(r-tk)};q^{mk})_{\infty}}{(-q^{r},-q^{mk-r};q^{mk})_{\infty}}=:\sum_{n=0}^{\infty} d_nq^n,
\end{equation}
then $d_{kn-rs}$ is always zero.
\end{theorem}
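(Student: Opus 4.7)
The plan is to prove Theorem \ref{t3n} by deriving a ``$z\mapsto-z$'' analog of Corollary \ref{c1} and then applying the same substitution scheme \cite{McL15} uses to obtain Theorem \ref{t2n} from Corollary \ref{c1}. First I would return to \eqref{prop1eq1}, replace $z$ with $-z$, and split the resulting bilateral sum by residues $n=pn'+j$ modulo $p$. When $p$ is odd we have $(-z)^{pn'+j}=(-1)^j(-z^p)^{n'}z^j$, and re-applying \eqref{prop1eq1} to each inner sum (with base $q^p$ and parameter $-z^p$ in place of $z^p$) would give
\[
\frac{(q,q,-az,-q/(az);q)_\infty}{(a,q/a,-z,-q/z;q)_\infty}
=\sum_{j=0}^{p-1}(-1)^j z^j\frac{(q^p,q^p,-aq^jz^p,-q^{p-j}/(az^p);q^p)_\infty}{(aq^j,q^{p-j}/a,-z^p,-q^p/z^p;q^p)_\infty}.
\]

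Next I would specialize $a=-q^{p-s}z^{-p}$, the sign-flipped version of the specialization used in the proof of Corollary \ref{c1}. With this choice one computes $-q^{p-j}/(az^p)=q^{s-j}$, so the $j=s$ summand acquires the factor $(1;q^p)_\infty=0$ and vanishes. The upshot is a ``minus'' analog of Corollary \ref{c1}: for $p$ odd and $0\leq s\leq p-1$, if
\[
\frac{(q,q,q^{p-s}z^{1-p},q^{s+1-p}z^{p-1};q)_\infty}{(-q^{p-s}z^{-p},-q^{s+1-p}z^p,-z,-q/z;q)_\infty}=\sum_{n=-\infty}^{\infty}d_n(q)z^n,
\]
then $d_{pn+s}(q)=0$ for every integer $n$.

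Finally I would apply the same substitution used in \cite{McL15} to derive Theorem \ref{t2n} from Corollary \ref{c1}: setting $p=k$ (now necessarily odd), and specializing $q$ and $z$ in the parallel way. The denominator factor $(-z,-q/z;q)_\infty$ would then transform into $(-q^r,-q^{mk-r};q^{mk})_\infty$, which is the distinguishing denominator of \eqref{eq3n}; the remaining factors on the left should rearrange --- via Pochhammer shifts of the form $(q^a;q^{mk})_\infty=(1-q^a)(q^{a+mk};q^{mk})_\infty$ --- into the numerator $(q^{r-tk},q^{mk-(r-tk)};q^{mk})_\infty$. Using $\gcd(r,k)=1$ to transfer the vanishing from a progression in the $z$-exponent to a progression in the $q$-exponent would then yield $d_{kn-rs}=0$.

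The main obstacle will be the bookkeeping in the last step: verifying that the extra Pochhammer factors produced by the substitution collapse cleanly to the target (as they do in \cite{McL15}), and that the finite polynomial prefactors introduced by the Pochhammer shifts do not restore the vanished coefficient. The only essentially new ingredient beyond the proof of Theorem \ref{t2n} is the $z\mapsto-z$ substitution, which forces the restriction that $k$ be odd: this is needed so that $(-1)^{pn'}$ collapses to $(-1)^{n'}$ and permits the re-application of \eqref{prop1eq1} with base $q^p$.
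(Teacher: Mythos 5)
Your proposal is correct and follows essentially the same route as the paper: the paper proves Theorem \ref{t3n} by the identical substitution scheme used for Theorem \ref{t2n}, simply setting $z=-q^{mk-r}$ in Corollary \ref{c1} (which, being an identity in $z$, already covers negative values, so your separately derived ``minus'' analog is just a repackaging of the same step), with $k$ odd needed for exactly the reason you give, namely to preserve the sign in $z^k$. The bookkeeping concern you flag at the end is resolved as in the proof of Theorem \ref{t2n}: the adjusting factor is a ratio of products in the base $q^{k}$ only, so it cannot move coefficients between residue classes modulo $k$.
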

The proofs of these theorems in that paper also employed the Ramanujan's $_1\psi_1$ summation formula
\eqref{ram1}, but in the present paper it is used in a different way to give an $m$-dissection of a special case of the product on the right side of \eqref{ram1}, from which these results follow automatically.

Theorems \ref{t2n} and \ref{t3n} now follow in a straightforward manner from Corollary \ref{c1}.

\begin{corollary}\label{c2}
Theorems \ref{t2n} and \ref{t3n} are true.
\end{corollary}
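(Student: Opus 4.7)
The plan is to derive Theorem \ref{t2n} as a direct specialization of Corollary \ref{c1}. In \eqref{c1eq} I take $q\mapsto q^{mk}$, $p=k$, and replace the excluded index (written as $s$ in Corollary \ref{c1}) by $k-s$, where $s$ is the parameter from Theorem \ref{t2n}; when $s=0$ I instead take the excluded index to be $0$. I then set $z=q^r$ and use the relation $r-t=sm$ to identify Pochhammer arguments. A direct calculation shows that the left side of \eqref{c1eq}, after these substitutions, equals $P(q)F(q)$, where $F(q)$ is the product appearing in \eqref{eq2n} and
\begin{equation*}
P(q)\;=\;\frac{(q^{mk},q^{mk};q^{mk})_\infty}{(q^{-tk},q^{mk+tk};q^{mk})_\infty}.
\end{equation*}
On the other hand, the left side of \eqref{c1eq} is, by Proposition \ref{prop1}, equal to $\sum_{j=0}^{k-1} z^j T_j(z^k,q)$, and the block $j=k-s$ vanishes identically by the very argument that establishes Corollary \ref{c1}. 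Substituting $z=q^r$ in this decomposition gives
\begin{equation*}
P(q)\,F(q)\;=\;\sum_{\substack{0\le j\le k-1\\ j\ne k-s}} q^{rj}\,T_j(q^{rk},q).
\end{equation*}

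The key structural observation is that $P(q)$ and each $T_j(q^{rk},q)$ are Laurent series in $q^k$. For $P(q)$ this is clear because the Pochhammer arguments $q^{mk}$, $q^{-tk}$ and $q^{mk+tk}$ are all integer powers of $q^k$, and $(q^{-tk};q^{mk})_\infty$ is nonzero since $1\le t<m$ forbids any factor from vanishing. For $T_j(q^{rk},q)$ it is clear because the Pochhammer base is $q^{mk^2}$ and every remaining argument, after $z^k=q^{rk}$, is an integer power of $q^k$. Dividing through by $P(q)$ therefore expresses $F(q)$ as $\sum_{j\ne k-s} q^{rj}\,U_j(q^k)$ for certain Laurent series $U_j$ in $q^k$, so every power of $q$ actually appearing in $F(q)$ lies in one of the residue classes $\{rj\bmod k:0\le j\le k-1,\ j\ne k-s\}$. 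Since $\gcd(r,k)=1$, the map $j\mapsto rj\bmod k$ is a bijection on $\{0,\dots,k-1\}$; the unique missing class is $r(k-s)\equiv -rs\pmod{k}$, so $c_{kn-rs}=0$ for every integer $n$, giving Theorem \ref{t2n}.

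For Theorem \ref{t3n} I would run the same plan with $a$ and $z$ replaced by $-q^{-tk}$ and $-q^r$ respectively, which produces the required minus signs in the denominator Pochhammers $(-q^r,-q^{mk-r};q^{mk})_\infty$. The hypothesis that $k$ is odd enters precisely when checking that the $j=k-s$ summand still vanishes: with $k$ odd we have $z^k=(-q^r)^k=-q^{rk}$, which is exactly the sign needed for the critical numerator Pochhammer of that summand to collapse to $(1;q^{mk^2})_\infty=0$; if $k$ were even the corresponding factor would instead become $(-1;q^{mk^2})_\infty$, which does not vanish. Apart from this sign subtlety, the main obstacle in both cases is just the routine bookkeeping of the negative-exponent Pochhammers and overall signs that appear in $P(q)$; the residue-class argument then goes through unchanged.
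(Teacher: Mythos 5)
Your proof is correct and follows essentially the same route as the paper: specialize Corollary \ref{c1} with $q\to q^{mk}$, $p=k$ and $z$ a (signed) power of $q$, divide out the extraneous product supported on powers of $q^k$, and invoke $\gcd(r,k)=1$ to identify the single missing residue class; your choices $z=q^r$ with excluded index $k-s$ versus the paper's $z=q^{mk-r}$ with excluded index $s$ differ only by the reindexing $j\mapsto k-j$ and give the same class $-rs\pmod k$. Your explicit accounting of where the hypothesis ``$k$ odd'' enters in Theorem \ref{t3n} is accurate and is in fact more detailed than the paper, which only remarks that the proof is ``similar.''
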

\begin{proof}
For Theorem \ref{t2n}, replace $q$ with $q^{mk}$ and set $p=k$ in Corollary \ref{c1}. Set $z=q^{mk-r}$, where $r$ is as in Theorem \ref{t2n} ($1 \leq r <mk$, $\gcd(r,k)=1$, so that $ r = sm + t$,
for some integers $s$ and $t$, where $0 \leq  s < k$, $1 \leq t < m$).
Let this particular value of $s$ be the $s$ in Corollary \ref{c1}.
Then
\begin{multline}\label{c2eq1}
\frac{(q^{km},q^{km},q^{r-k(r-ms)},q^{mk-(r-k(r-ms))};q^{km})_{\infty}}
{(q^{k(r-ms)},q^{km-k(r-ms)},q^{r},q^{km-r};q^{km})_{\infty}}
=\\
\sum_{j=0}^{k-1}
q^{j (k m-r)}
\frac{\left(
q^{k^2 m},
q^{k^2 m},
q^{k m(j+k-s)},
q^{k m (s-j)};
q^{k^2  m}
\right)_{\infty}}
{\left(
q^{k (k m-r)},
q^{k r},
q^{k (j m-s m+r)},
q^{k (-j m+k m+s m-r)};
q^{k^2 m}
\right)_{\infty }}.
\end{multline}
After multiplying both sides by the reciprocal of
\[
\frac{(q^{km},q^{km};q^{km})_{\infty}}
{(q^{k(r-ms)},q^{km-k(r-ms)};q^{km})_{\infty}}
\]
and noting that $r-ms=t$, the left side of \eqref{c2eq1} becomes the left side of \eqref{eq2n}. In the new right side of \eqref{c2eq1}, the $q$-products are such that all powers of $q$ are multiples of $k$, and since $r$ is invertible modulo $k$ (recall $\gcd(r,k)=1$),  each term of the sum contains all powers of $q$ in a single arithmetic progression modulo $k$. Theorem \ref{t2n} follows, since the term corresponding to $j=s$ is identically zero.

The proof of Theorem \ref{t3n} is similar, the only real difference being to set $z=-q^{mk-r}$.
\end{proof}

\section{$m$-Dissections}
In none of the papers referenced above did the authors give the relevant dissection of the infinite products, i.e. closed formulae for the arithmetic progressions in which the coefficients do not vanish. These dissections may be found elsewhere, for example the 4-dissection of the product \eqref{mod81} was given by Hirschhorn in \cite{H02} and Chen and Huang in \cite{CH05}, using different methods (and both methods differ from the method in the present paper). In those papers, producing the dissection required some effort, using various $q$-series manipulations and the Jacobi triple product identity.

As an illustration of the present method, where the dissections are produced automatically, their result is reproduced in the next corollary.
\begin{corollary}\label{c3}
The 4-dissection of the infinite product \eqref{mod81} is given by
\begin{multline}\label{c3eq1}
\frac{(q^3,q^5;q^8)_{\infty}}{(q,q^7;q^8)_{\infty}}=
\frac{(q^{12},q^{12},q^{20},q^{20};q^{32})_{\infty}}
{(q^{8},q^{16},q^{16},q^{24};q^{32})_{\infty}}\\
+\,q\,
\frac{(q^{4},q^{12},q^{20},q^{28};q^{32})_{\infty}}
{(q^{8},q^{8},q^{24},q^{24};q^{32})_{\infty}}
+\,q^2\,
\frac{(q^{4},q^{12},q^{20},q^{28};q^{32})_{\infty}}
{(q^{8},q^{16},q^{16},q^{24};q^{32})_{\infty}}
\end{multline}
\end{corollary}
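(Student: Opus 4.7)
The plan is to obtain this 4-dissection as a direct specialization of Corollary \ref{c1}. Since the product in \eqref{mod81} is (up to an easily computed factor) the $(k,r)=(4,3)$ case of Theorem \ref{t2n} with $m=2$, Corollary \ref{c1} should apply with $q\to q^{8}$, $p=4$, and $s=3$, and the free parameter $z$ set equal to $q$ (so that the dissection in powers of $z$ translates into a dissection in powers of $q$). First I would write out the left side of \eqref{c1eq} under these specializations and obtain
\[
\frac{(q^{8},q^{8},q^{5},q^{3};q^{8})_{\infty}}
{(q^{4},q^{4},q,q^{7};q^{8})_{\infty}}
 = \frac{(q^{8},q^{8};q^{8})_{\infty}}{(q^{4},q^{4};q^{8})_{\infty}}\cdot\frac{(q^{3},q^{5};q^{8})_{\infty}}{(q,q^{7};q^{8})_{\infty}}.
\]
Dividing through by the prefactor $(q^{8},q^{8};q^{8})_{\infty}/(q^{4},q^{4};q^{8})_{\infty}$ then expresses $F(q)$ as a sum over $j=0,1,2,3$ coming from the right side of \eqref{midisszram}.

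Next I would write down the four terms of that sum. With the choice $s=3$, the $j=s=3$ summand contains a factor of $(q^{0};q^{32})_{\infty}=0$ and therefore vanishes, as predicted by Corollary \ref{c1} (and consistently with the Richmond--Szekeres result $c_{4n+3}=0$). The remaining three summands, indexed by $j=0,1,2$, contribute the three terms of the claimed dissection \eqref{c3eq1}, after multiplication by the prefactor $(q^{4},q^{4};q^{8})_{\infty}/(q^{8},q^{8};q^{8})_{\infty}$.

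The remaining work is purely bookkeeping: I would rewrite both the prefactor and each summand using the decomposition $(q^{4};q^{8})_{\infty}=(q^{4},q^{12},q^{20},q^{28};q^{32})_{\infty}$ and $(q^{8};q^{8})_{\infty}=(q^{8},q^{16},q^{24},q^{32};q^{32})_{\infty}$, so that everything is expressed in base $q^{32}$. Term-by-term cancellation should then match each of the three summands in \eqref{c3eq1} exactly. For example, the $j=1$ term of the dissection identity, after incorporating the prefactor, should collapse to $q\,(q^{4},q^{12},q^{20},q^{28};q^{32})_{\infty}/(q^{8},q^{8},q^{24},q^{24};q^{32})_{\infty}$, matching the middle term of \eqref{c3eq1}.

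No genuine obstacle is expected; the entire argument is driven by the choice of parameters, and the only thing that could plausibly cause trouble is the algebraic simplification of each summand into the precise shape listed in \eqref{c3eq1}. The main point worth emphasizing, however, is conceptual rather than computational: unlike the proofs in \cite{H02} and \cite{CH05}, which invoke the Jacobi triple product and various series manipulations to \emph{produce} the dissection, here the dissection is present automatically on the right side of \eqref{midisszram} once $a$ and $z$ are chosen appropriately, and no auxiliary $q$-series identity is needed.
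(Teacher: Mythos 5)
Your proof is correct and is essentially the paper's own argument: the paper simply routes the same specialization through the intermediate identity \eqref{c2eq1} with $(k,m,r,s)=(4,2,7,3)$, which amounts exactly to your choices $q\to q^{8}$, $p=4$, $s=3$, $z=q$ in Corollary \ref{c1}, followed by the same multiplication by $(q^{4},q^{4};q^{8})_{\infty}/(q^{8},q^{8};q^{8})_{\infty}$ and the same rewriting of $(q^{4};q^{8})_{\infty}$ and $(q^{8};q^{8})_{\infty}$ in base $q^{32}$. (Only your parenthetical identification of the product as the ``$(k,r)=(4,3)$ case of Theorem \ref{t2n}'' is slightly off --- in that theorem's notation one has $r=7$, $s=3$, $t=1$ --- but this does not affect your argument.)
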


\begin{proof}
In \eqref{c2eq1}, set $k=4$, $m=2$, $r=7$ and $s=3$. Multiply both sides by
\[
\frac{(q^{4},q^{4};q^{8})_{\infty}}
{(q^{8},q^{8};q^{8})_{\infty}}
\]
and the result follows upon rewriting the products $(q^{4};q^{8})_{\infty}$ and $(q^{8};q^{8})_{\infty}$ in terms of the base $q^{32}$.
\end{proof}

We next consider the limitations of the expansion provided by \eqref{midisszram} in producing $m$-dissections. In this identity, if $q$ is replaced with $q^t$ and then the substitutions $z=q^r$ and $a=q^s$ are made (some integers $r$ and $s$ satisfying $1 \leq r,s<t$), the identity
 \begin{equation}\label{midisszramt}
 \frac{(q^t,q^t,q^{r+s},q^{t-r-s};q^t)_{\infty}}
 {(q^s,q^{t-s},q^r,q^{t-r};q^t)_{\infty}}
 =
 \sum_{j=0}^{p-1}q^{jr}
 \frac{(q^{pt},q^{pt},q^{pr+s +jt},q^{(p-j)t-pr-s};q^{pt})_{\infty}}
 {(q^{jt+s},q^{(p-j)t-s},q^{pr},q^{(t-r)p};q^{pt})_{\infty}}
 \end{equation}
 is produced.
If it is desired that the right side represents a $p$-dissection of the left side, then it is necessary that \\
(a) all of the powers of $q$ in each of the infinite products on the right side must be multiples of $p$;\\
(b) The integer $r$ must satisfy $\gcd(r,p)=1$.

Hence $p|(sj+t),\, 0\leq j\leq p-1$, and hence $p|s$ and $p|t$, or $s=pu$ and $t=pv$ for some integers $u$ and $v$.

For example, setting $t=12$, $s=6$ and $r=1$ in \eqref{midisszramt} leads to
\begin{equation}\label{midisszramtex}
 \frac{(q^{12},q^{12},q^{7},q^{5};q^{12})_{\infty}}
 {(q^6,q^{6},q,q^{11};q^{12})_{\infty}}
 =
 \sum_{j=0}^{p-1}q^{j}
 \frac{(q^{12p},q^{12p},q^{12j+p+6},q^{12(p-j)-p-6};q^{12p})_{\infty}}
 {(q^{12j+6},q^{12(p-j)-6},q^{p},q^{11p};q^{12p})_{\infty}},
 \end{equation}
and letting $p=2,3$ and 6 immediately produces, in turn, the 2-, 3- and 6-dissections of
\begin{equation}\label{mod12pr}
 \frac{(q^{7},q^{5};q^{12})_{\infty}}
 {(q,q^{11};q^{12})_{\infty}}
\end{equation}
proven by Lin \cite{L13}, with no manipulation of infinite series or the use of other theta function identities being necessary (Lin also gave the 4-dissection of \eqref{mod12pr}, but the method described here does not provide that). We give the 3-dissection as an example.

\begin{corollary}\label{c4}(Lin, \cite[Theorem 3.2]{L13})
The 3-dissection of \eqref{mod12pr} is given by
\begin{multline}
\frac{(q^{7},q^{5};q^{12})_{\infty}}
 {(q,q^{11};q^{12})_{\infty}}
 =\frac{(q^{6},q^{30};q^{36})_{\infty}}
 {(q^{12},q^{12},q^{24},q^{24};q^{36})_{\infty}} \times\\
 \left(
 \frac{(q^{9},q^{18},q^{18},q^{27};q^{36})_{\infty}}
 {(q^{3},q^{33};q^{36})_{\infty}}
 +
 q\frac{(q^{6},q^{15},q^{21},q^{30};q^{36})_{\infty}}
 {(q^{3},q^{33};q^{36})_{\infty}}
+
q^2(q^{18},q^{18};q^{36})_{\infty}
 \right).
\end{multline}
\end{corollary}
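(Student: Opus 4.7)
The plan is to specialize \eqref{midisszramtex} at $p=3$, carry out the trivial simplifications in each of the three terms $j=0,1,2$, and then convert the base-$q^{12}$ products on the left side to the target base-$q^{12}$ product (and the extraneous base-$q^{12}$ factors on the right to base $q^{36}$). No new $q$-series identity is required beyond the elementary product splittings
\[
(q^{12};q^{12})_{\infty}=(q^{12},q^{24},q^{36};q^{36})_{\infty},\qquad
(q^{6};q^{12})_{\infty}=(q^{6},q^{18},q^{30};q^{36})_{\infty},
\]
which merely record exponents modulo $36$.

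First I would write down \eqref{midisszramtex} with $p=3$. The left side is
\[
\frac{(q^{12},q^{12},q^{7},q^{5};q^{12})_{\infty}}{(q^{6},q^{6},q,q^{11};q^{12})_{\infty}},
\]
and the three summands on the right are, for $j=0,1,2$,
\[
\frac{(q^{36},q^{36},q^{9},q^{27};q^{36})_{\infty}}{(q^{6},q^{30},q^{3},q^{33};q^{36})_{\infty}},\ \
q\,\frac{(q^{36},q^{36},q^{21},q^{15};q^{36})_{\infty}}{(q^{18},q^{18},q^{3},q^{33};q^{36})_{\infty}},\ \
q^{2}\,\frac{(q^{36},q^{36},q^{33},q^{3};q^{36})_{\infty}}{(q^{30},q^{6},q^{3},q^{33};q^{36})_{\infty}}.
\]
To recover the target left side $(q^7,q^5;q^{12})_{\infty}/(q,q^{11};q^{12})_{\infty}$, I would multiply both sides by the factor $(q^6,q^6;q^{12})_{\infty}/(q^{12},q^{12};q^{12})_{\infty}$, which by the two splittings above equals
\[
\frac{(q^{6},q^{18},q^{30};q^{36})_{\infty}^{2}}{(q^{12},q^{24},q^{36};q^{36})_{\infty}^{2}}.
\]

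Next I would distribute this factor over the three summands and simplify. The common prefactor $(q^{6},q^{30};q^{36})_{\infty}/(q^{12},q^{12},q^{24},q^{24};q^{36})_{\infty}$ in the statement of the corollary is produced by pulling out a single copy of $(q^{6},q^{30};q^{36})_{\infty}/(q^{12},q^{24};q^{36})_{\infty}^{2}$ from each term; everything remaining then matches the bracketed expression in the corollary. The only slightly nonroutine point occurs at $j=2$: the numerator factor $(q^{33},q^{3};q^{36})_{\infty}$ cancels the denominator factor $(q^{3},q^{33};q^{36})_{\infty}$ completely, which is why the third summand in the corollary has the unusual form $q^{2}(q^{18},q^{18};q^{36})_{\infty}$ with no denominator of its own.

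The only potential obstacle is bookkeeping: keeping track of which residues modulo $36$ each base-$q^{12}$ and base-$q^{6}$ product contributes, and ensuring that paired factors such as $(q^{18};q^{36})_{\infty}^{2}$ and $(q^{12},q^{24};q^{36})_{\infty}^{2}$ are collected with the correct multiplicities. Since every step is a cancellation of product factors or a relabeling of exponents, the verification is entirely mechanical once the identity obtained from \eqref{midisszramtex} at $p=3$ is written out.
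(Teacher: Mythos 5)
Your proposal is correct and follows exactly the paper's route: the paper's proof simply says to set $p=3$ in \eqref{midisszramtex} and finish with elementary $q$-product manipulations, and you have carried out precisely those manipulations (the three summands, the multiplier $(q^6,q^6;q^{12})_{\infty}/(q^{12},q^{12};q^{12})_{\infty}$, the base-$36$ splittings, and the complete cancellation of $(q^3,q^{33};q^{36})_{\infty}$ in the $j=2$ term) correctly.
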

\begin{proof}
Let $p=3$ in \eqref{midisszramtex}, and the result follows after a little algebra and some elementary $q$-product manipulations.
\end{proof}
Remarks: 1) Lin \cite{L13} also gave the 2-, 3-, 4- and 6-dissections of the reciprocal of the infinite product \eqref{mod12pr}, and the  2-, 3-  and 6-dissections may  be similarly produced by first
setting $t=12$, $s=6$ and $r=5$ in \eqref{midisszramt}, and then setting $p=2, 3$ and $6$ in turn in the resulting identity.

2) The reciprocal of the infinite product \eqref{mod12pr} is also equal to a special case of a quite general continued fraction recorded by Ramunajan in his second notebook (see \cite{R57}, \cite[Entry 12, page 24]{B91}),
\begin{multline}\label{cframentry12}
\frac{1}{1-ab}
\+
\frac{(a-bq)(b-aq)}{(1-ab)(1+q^2)}
\+
\frac{(a-bq^3)(b-aq^3)}{(1-ab)(1+q^4)}
\+
\cds \\
=
{\displaystyle
\frac{(a^2q^3,b^2q^3;q^4)_{\infty}}{(a^2q,b^2q;q^4)_{\infty}}},
\end{multline}
after replacing $q$ with $q^3$, then setting $a=q^2$ and $b=q$, and finally multiplying both sides by $1-q$. This special case was further investigated by Naika et al. in \cite{NDS08}, where  explicit evaluations, reciprocity theorems and integral representations were developed.

The formula \eqref{midisszramt} and the requirements that $p|t$ and $p|s$ also demonstrate some of the limitations of this method of producing $m$-dissections. For example, it cannot produce any dissections of products that arise from setting $t=5$, and hence does not lead to any results on vanishing coefficients (if they existed) in these cases. More generally, the formula \eqref{midisszramt} will not give dissections of any product that arises when $t=p$, $p$ a prime.


The formula \eqref{midisszramt} in some cases may be partially successful at recovering results proved elsewhere by other methods. For example, Hirschhorn and Roselin in \cite{HR10} consider Ramanujan's cubic continued fraction
\begin{align}\label{z3}
RC(q):=\frac{1}{1} \+
 \frac{q+q^{2}}{1}
\+
 \frac{q^{2}+q^{4}}{1}
\+
 \frac{q^{3}+q^{6}}{1}
\+ \cds =\frac{(q,q^5;q^{6})_{\infty}}{(q^{3},q^{3};q^{6})_{\infty}},
\end{align}
where the 2-, 3-, 4- and 6-dissections of $RC(q)$  and its reciprocal were given. The present method gives the 2-dissections of these, but not the 3-, 4- and 6-dissections.
\begin{corollary}\label{c5}
The 2-dissections of $RC(q)$ and $1/RC(q)$ are given by
\begin{align}\label{c5eq1}
RC(q)&=\frac{(q^4,q^{8};q^{12})_{\infty}}
{(q^6;q^{12})_{\infty} ^2}
\left(
\frac{(q^4,q^{8};q^{12})_{\infty}}
{(q^2,q^{10};q^{12})_{\infty}}
+q
\frac{(q^2,q^{10};q^{12})_{\infty}}
{(q^4,q^{8};q^{12})_{\infty}}
\right),\\
\frac{1}{RC(q)}&=
\frac{1}
{(q^6;q^{12})_{\infty} ^4}
\left(
(q^4,q^{8};q^{12})_{\infty}^2
-q (q^2,q^{10};q^{12})_{\infty}^2
\right).
\notag
\end{align}
\end{corollary}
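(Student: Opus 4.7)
The plan is to derive each of the two asserted identities by a single application of Proposition~\ref{prop1} (equation~\eqref{midisszram}) under the substitution $q \mapsto q^{6}$ and $p = 2$, but with two different choices of the parameters $(a, z)$ designed to produce, up to an overall $q$-product factor, $1/RC(q)$ in one case and $RC(q)$ in the other.

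For the first identity, I would set $a = q^{2}$ and $z = q$, so that $az = q^{3} = q^{6}/(az)$. The left side of~\eqref{midisszram} then collapses to
\[
\frac{(q^{6},q^{6},q^{3},q^{3};q^{6})_{\infty}}{(q^{2},q^{4},q,q^{5};q^{6})_{\infty}} = \frac{(q^{6},q^{6};q^{6})_{\infty}}{(q^{2},q^{4};q^{6})_{\infty}}\cdot\frac{1}{RC(q)}.
\]
All exponents arising in the $j=0$ and $j=1$ terms on the right remain positive, so no auxiliary reductions are required. After multiplying both sides by $(q^{2},q^{4};q^{6})_{\infty}/(q^{6},q^{6};q^{6})_{\infty}$ and converting each $(q^{k};q^{6})_{\infty}$ to base $q^{12}$ via the splitting $(q^{k};q^{6})_{\infty}=(q^{k},q^{k+6};q^{12})_{\infty}$, the identity takes the stated form.

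For the second identity, I would instead set $a = q^{2}$ and $z = q^{3}$, giving $az = q^{5}$ and $q^{6}/(az) = q$, so that the left side becomes $\tfrac{(q^{6},q^{6};q^{6})_{\infty}}{(q^{2},q^{4};q^{6})_{\infty}}\,RC(q)$. The difficulty in this case is that the $j = 1$ term on the right now contains the pair $(q^{14},q^{-2};q^{12})_{\infty}$. I would handle this with the elementary identities
\[
(q^{-k};q^{n})_{\infty} = (1-q^{-k})(q^{n-k};q^{n})_{\infty}, \qquad 1-q^{-k} = -q^{-k}(1-q^{k}),
\]
combined with $(q^{14};q^{12})_{\infty} = (q^{2};q^{12})_{\infty}/(1-q^{2})$, which together collapse to $(q^{14},q^{-2};q^{12})_{\infty} = -q^{-2}(q^{2},q^{10};q^{12})_{\infty}$. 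The factor $q^{-2}$ combines with the overall $z^{1}=q^{3}$ to produce $-q$, giving the minus sign and the first-power coefficient between the two terms.

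The main obstacle is essentially one of bookkeeping: tracking the sign and the shift in the exponent of $q$ that are introduced when the negative-exponent Pochhammer is reduced to a standard one, and then systematically converting all products between bases $q^{6}$ and $q^{12}$. Once these reductions are performed, both asserted identities follow from routine $q$-product manipulations, with no appeal to theta-function identities or other external tools beyond those already used earlier in the paper.
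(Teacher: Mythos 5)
Your approach is exactly the paper's: the author's proof applies \eqref{midisszramt} with $t=6$, $s=2$, $p=2$, taking $r=3$ for $RC(q)$ and $r=1$ for $1/RC(q)$, which is precisely your choice $q\to q^{6}$, $a=q^{2}$ with $z=q^{3}$ and $z=q$ respectively in \eqref{midisszram}. Your reduction $(q^{14},q^{-2};q^{12})_{\infty}=-q^{-2}(q^{2},q^{10};q^{12})_{\infty}$ is the correct and necessary bookkeeping that the paper dismisses as ``elementary algebra,'' and the base-$q^{6}$ to base-$q^{12}$ conversions you describe do carry each side to the displayed products.

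There is, however, a mismatch you do not address. Your first computation, with $z=q$, produces $\frac{(q^{6},q^{6};q^{6})_{\infty}}{(q^{2},q^{4};q^{6})_{\infty}}\cdot\frac{1}{RC(q)}$ on the left, so after clearing the prefactor what you have actually proved is that the \emph{first} displayed right-hand side equals $1/RC(q)$, whereas the first identity of the corollary asserts it equals $RC(q)$; likewise your second computation proves that the \emph{second} displayed right-hand side equals $RC(q)$. So you cannot conclude in either case that ``the identity takes the stated form'': each derivation lands on the other stated identity. In fact your computations are correct and it is the printed statement that is at fault: comparing low-order coefficients, $RC(q)=1-q+0\cdot q^{2}+2q^{3}-2q^{4}-\cdots$, while the first right-hand side expands as $1+q+q^{2}-q^{3}-q^{4}-\cdots=1/RC(q)$ and the second expands as $1-q+2q^{3}-2q^{4}-\cdots=RC(q)$; the two right-hand sides of \eqref{c5eq1} have been transposed. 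A complete write-up should note and correct this transposition rather than assert agreement with the corollary as printed.
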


\begin{proof}
For $RC(q)$, in \eqref{midisszramt} set $t=6$, $s = 2$, $r = 3$ and  $p = 2$, and for $1/RC(q)$ set $t=6$, $s = 2$, $r = 1$ and  $p = 2$. The results follow, as above, after some elementary algebra and $q$-series manipulations.
\end{proof}

Remarks: (1) These 2-dissections were represented differently by the authors in \cite{HR10}, where they were given in terms of infinite products of the form $(q^k;q^k)_{\infty}$.

(2) Recall that the Borwein cubic function $a(q)$ (see \cite{BB91}) is defined by
\[
a(q) = \sum_{m,n=-\infty}^{\infty}q^{m^2+mn+n^2}.
\]
We note in passing that it was shown in \cite{B98} that
\begin{equation}\label{aqeq}
a(q)-a(q^2) =
6q
\frac{(q,q^5,q^6,q^6;q^{6})_{\infty}}
{(q^2,q^{3},q^{3},q^{4};q^{6})_{\infty}},
\end{equation}
where the infinite product on the right side is the same as that on the left side of \eqref{midisszramt}, after  setting $t=6$, $s = 2$, $r = 3$, the same choices that were  made in deriving the 2-dissection of $RC(q)$.

\section{An identity that follows from Bailey's $_6\psi_6$ summation formula.}

The next identity, which we believe to be a new observation, is somewhat similar to that \eqref{midisszram}, but unfortunately does not appear to have any applications in terms of $m$-dissections or vanishing coefficients. However, we include it as we believe it deserves to be known, and it may have applications which the author does not presently see.

\begin{theorem}
If $|q|<1$ and $m$ is a positive integer, then
\begin{multline}\label{6psi6teq1}
{\displaystyle
\frac{\left(q,q,a,\frac{q}{a},\frac{b q}{d}, \frac{dq}{b}, \frac{aq}{b d},
\frac{b d q}{a};q\right)_{\infty }}
{\left(b,\frac{q}{b},d,\frac{q}{d},\frac{a}{b},\frac{bq}{a},\frac{a}{d},\frac{dq}{a};q\right)_{\infty }}}\\
   =
\sum _{r=0}^{m-1} q^r
\frac{
\left(q^m,q^m,a q^{2 r},\frac{q^{m-2 r}}{a},\frac{b q^m}{d},\frac{d q^m}{b},
\frac{a q^m}{b d},\frac{b dq^m}{a};q^m\right){}_{\infty }}
{\left(b q^r,\frac{q^{m-r}}{b},d q^r,\frac{q^{m-r}}{d},\frac{a q^r}{b},\frac{b q^{m-r}}{a},\frac{a q^r}{d},
\frac{dq^{m-r}}{a};q^m\right){}_{\infty }}.
\end{multline}
\end{theorem}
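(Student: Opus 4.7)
The plan is to mimic the proof of Proposition \ref{prop1}, but with Bailey's very-well-poised ${}_6\psi_6$ summation in place of Ramanujan's ${}_1\psi_1$. The first step is to recognise the left-hand side of \eqref{6psi6teq1} as the product side of an underlying Lambert-series identity. Taking Bailey's ${}_6\psi_6$ formula and specialising its four free parameters to $b,\,a/b,\,d,\,a/d$ forces the balancing factor $a^2q/(bcde)$ to collapse to $q$; the summand then simplifies by the elementary identity $(x;q)_n/(xq;q)_n = (1-x)/(1-xq^n)$. Absorbing the resulting constant $(1-a)/[(1-b)(1-a/b)(1-d)(1-a/d)]$ into the $q$-products via $(a;q)_\infty = (1-a)(aq;q)_\infty$ and its analogues, one arrives at
\begin{equation*}
\frac{(q, q, a, q/a, bq/d, dq/b, aq/(bd), bdq/a; q)_\infty}{(b, q/b, d, q/d, a/b, bq/a, a/d, dq/a; q)_\infty}
= \sum_{n=-\infty}^{\infty} \frac{(1-aq^{2n})\, q^n}{(1-bq^n)(1-(a/b)q^n)(1-dq^n)(1-(a/d)q^n)}.
\end{equation*}

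The second step is the $m$-dissection trick of \eqref{lam1eqb}: write $n = mN + r$ with $0 \leq r \leq m-1$ and $N$ ranging over the integers, so that the bilateral sum rearranges as
\begin{equation*}
\sum_{r=0}^{m-1} q^r \sum_{N=-\infty}^{\infty} \frac{\bigl(1-(aq^{2r})(q^m)^{2N}\bigr)\,(q^m)^N}{(1-(bq^r)(q^m)^N)(1-((a/b)q^r)(q^m)^N)(1-(dq^r)(q^m)^N)(1-((a/d)q^r)(q^m)^N)}.
\end{equation*}

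Each inner sum has exactly the shape of the original Lambert series, but with the substitutions $q \mapsto q^m$, $a \mapsto aq^{2r}$, $b \mapsto bq^r$, $d \mapsto dq^r$. The key observation is that this substitution preserves the pairing of factors in the denominator, since $(aq^{2r})/(bq^r) = aq^r/b$ and analogously for $d$. Applying the identity of the first step in reverse to each inner sum converts it back into a product, and substituting the shifted parameters into the right-hand side of that identity reproduces the $r$-th summand on the right-hand side of \eqref{6psi6teq1} exactly, term by term.

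The main obstacle is the opening step: locating the precise specialisation of Bailey's ${}_6\psi_6$ that yields the LHS of \eqref{6psi6teq1} as its product side, and simplifying the bilateral summand into the clean four-factor Lambert form displayed above. Once this Lambert-series representation is in hand, the rest is the same mechanical application of \eqref{lam1eqb} that drives Proposition \ref{prop1}, with any convergence restrictions removed by analytic continuation exactly as in that proposition.
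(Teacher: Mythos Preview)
Your proposal is correct and follows essentially the same route as the paper: specialise Bailey's ${}_6\psi_6$ via $c=a/b$, $e=a/d$ (so the power becomes $q$), collapse the summand to the Lambert form $\sum_n (1-aq^{2n})q^n/[(1-bq^n)(1-(a/b)q^n)(1-dq^n)(1-(a/d)q^n)]$, then $m$-dissect and re-sum each piece with $q\mapsto q^m$, $a\mapsto aq^{2r}$, $b\mapsto bq^r$, $d\mapsto dq^r$. The paper's proof is exactly this, stated slightly more tersely.
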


\begin{proof}
In Bailey's \cite{B36} $_{6}\psi_6$ identity (which holds for $|qa^2/(bcde)|<1$)
\begin{multline}\label{8baileyeq1}
\sum_{n=-\infty}^{\infty}\frac{(q\sqrt{a},-q\sqrt{a},b,c,d,e;q)_n}
{(\sqrt{a},-\sqrt{a},aq/b,aq/c,aq/d,aq/e;q)_n}\left(\frac{q a^2}{bcde}\right)^n\\
=\frac{ (aq,aq/bc,aq/bd,aq/be,aq/cd,aq/ce,aq/de,q,q/a;q)_{\infty} } {
(aq/b,aq/c,aq/d,aq/e,q/b,q/c,q/d,q/e,qa^2/bcde;q)_{\infty} },
\end{multline}
set $c=a/b$, $e=a/d$, and multiply both sides of the resulting identity by
\[
\frac{(1 - a)}{ (1 - b) (1 - d) (1 - a/b) (1 - a/d)}
\]
to get
\begin{multline}\label{8baileyeq2}
\sum_{n=-\infty}^{\infty}
\frac{(1 - a q^{2 n}) q^ n}{(1 - b q^n) (1 - d q^n) (1 - a q^n/b) (1 - a q^n/d)}\\
= {\displaystyle
\frac{\left(q,q,a,\frac{q}{a},\frac{b q}{d}, \frac{dq}{b}, \frac{aq}{b d},
\frac{b d q}{a};q\right)_{\infty }}
{\left(b,\frac{q}{b},d,\frac{q}{d},\frac{a}{b},\frac{bq}{a},\frac{a}{d},
\frac{d q}{a};q\right)_{\infty }}}.
\end{multline}
The result \eqref{6psi6teq1} now follows, after splitting the sum on the left of \eqref{8baileyeq2} into sums of terms in the same arithmetic progression modulo $m$ ($n\to mn+r$, $-\infty <n <\infty$, $0\leq r \leq m-1$) and using \eqref{8baileyeq2} on each of these sums (with the replacements $q\to q^m$, $a \to a q^{2r}$, $b\to b q^r$ and $d \to d q^r$).
\end{proof}

One might ask if is possible to specialize the parameters in \eqref{6psi6teq1} to derive an interesting zero sum. The obvious choice, setting $a=1$, does not lead to anything interesting, as the non-zero terms in the sum on the right cancel in pairs. However, another choice does lead to something less trivial.

\begin{corollary}
If $|q|<1$ and $m$ is  a positive integer, then
\begin{equation}
\sum_{r=0}^{m}
\frac{\left(ad q^{2r},\displaystyle{\frac{q^{m}}{adq^{2r}}};q^m\right)_{\infty}q^r}
{\left(aq^r,\displaystyle{\frac{q^{m}}{aq^r}},
a q^{r-1},\displaystyle{\frac{q^{m}}{aq^{r-1}}},
d q^r, \displaystyle{\frac{q^{m}}{dq^r}},
dq^{r+1},\displaystyle{\frac{q^{m}}{dq^{r+1}}};q^m\right)_{\infty} }
=0
\end{equation}
\end{corollary}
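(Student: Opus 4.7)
The plan is to derive the identity as a direct specialization of \eqref{6psi6teq1}, chosen so that its left-hand side is forced to vanish. The central trick is to set $b=dq$: this turns the numerator factor $(dq/b;q)_{\infty}$ on the left-hand side of \eqref{6psi6teq1} into $(1;q)_{\infty}=0$, while the corresponding denominator stays finite, so the entire left-hand side becomes identically zero and the right-hand side sum must equal zero as well.

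First I would substitute $b=dq$ into the eight numerator and eight denominator $q$-products in the summand on the right-hand side of \eqref{6psi6teq1} and simplify. The four mixed numerator products $bq^{m}/d,\ dq^{m}/b,\ aq^{m}/(bd),\ bdq^{m}/a$ collapse to $q^{m+1},\ q^{m-1},\ aq^{m-1}/d^{2},\ d^{2}q^{m+1}/a$; the four mixed denominator products $bq^{r},\ q^{m-r}/b,\ aq^{r}/b,\ bq^{m-r}/a$ collapse to $dq^{r+1},\ q^{m-r-1}/d,\ aq^{r-1}/d,\ dq^{m-r+1}/a$.

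Second I would apply the replacement $a\mapsto ad$ throughout. This sends the numerator pair $aq^{2r},\ q^{m-2r}/a$ to $adq^{2r},\ q^{m-2r}/(ad)$ (matching the stated numerator pair), and converts the four $a/d$-type denominator factors into $aq^{r-1},\ aq^{r},\ q^{m-r+1}/a,\ q^{m-r}/a$; these, together with the four unchanged $d$-factors $dq^{r},\ dq^{r+1},\ q^{m-r}/d,\ q^{m-r-1}/d$, reproduce (as an unordered $8$-tuple) exactly the denominator in the statement. Third, I would observe that after these two substitutions the numerator of the $r$-th summand contains the $r$-independent factor $(q^{m},q^{m},q^{m+1},q^{m-1},aq^{m-1}/d,dq^{m+1}/a;q^{m})_{\infty}$, which may be pulled outside the sum; dividing both sides of the resulting identity $0=\text{(common factor)}\cdot\sum_{r}(\cdot)$ by this generically nonzero common factor leaves precisely the stated zero-sum identity, with the summation index $r$ ranging over a complete residue system mod $m$, inherited from the sum on the right-hand side of \eqref{6psi6teq1}.

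The main obstacle is the bookkeeping: one must carefully track the eight paired denominator products and the eight paired numerator products through the two substitutions $b=dq$ and $a\mapsto ad$, and verify that, as an unordered multiset of factors, each lines up with that appearing in the statement. Once this correspondence is checked, the vanishing is an immediate consequence of the vanishing of the left-hand side of \eqref{6psi6teq1} under the choice $b=dq$, and no further $q$-series manipulation or appeal to other identities is required.
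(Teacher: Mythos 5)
Your proposal is correct and is essentially identical to the paper's own (one-line) proof: set $b=dq$ in \eqref{6psi6teq1} so that the factor $(dq/b;q)_{\infty}=(1;q)_{\infty}$ kills the left side, replace $a$ by $ad$, and cancel the $r$-independent products. The only discrepancy is the upper summation limit: your derivation naturally yields $\sum_{r=0}^{m-1}$, whereas the statement prints $\sum_{r=0}^{m}$, which appears to be a typo in the paper rather than a gap in your argument.
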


\begin{proof}
In \eqref{6psi6teq1}, set $b=dq$,  then replace $a$ with $ad$, and finally cancel any $q$-products that are independent of the summation variable $r$.
\end{proof}

\section{Miscellaneous Identities}

In this section we consider several other functions, each of which has a number of different series representations, one of which is in terms of Lambert series. The Lambert series representation in each case is such that when the series is ``sectioned'', so that when terms in each arithmetic progression modulo $m$ ($m>1$ a positive integer) are grouped together, each of the resulting new Lambert series is of the same form as the original, albeit with shifted parameters. This in turn permits each of the other series representations to be similarly expressed as a sum of $m$ series of the same type. To illustrate this, we first recall some other results  of the present author (see \cite{McL10}). Those results are collected and slightly reformulated to make easier to express the new results.

Recall that a \emph{WP-Bailey
pair} is a pair of sequences $(\alpha_{n}(a,k,q)$,
$\beta_{n}(a,k,q))$  satisfying $\alpha_{0}(a,k,q)$
$=\beta_{0}(a,k,q)=1$, and for $n>0$, {\allowdisplaybreaks
\begin{align}\label{WPpair}
\beta_{n}(a,k,q) &= \sum_{j=0}^{n}
\frac{(k/a;q)_{n-j}(k;q)_{n+j}}{(q;q)_{n-j}(aq;q)_{n+j}}\alpha_{j}(a,k,q).
\end{align}
}If $k=0$, then the pair of sequences $(\alpha_n(a,q),\beta_n(a,q))$ is called a \emph{Bailey pair with respect to $a$}.

\begin{theorem}\label{f123eq}
 Let $(\alpha_{n}(a,k,q)$,
$\beta_{n}(a,k,q))$ be a WP-Bailey pair and let $f_i(a,k,z,q)$, $i=1,2,3$ be defined as follows:
{\allowdisplaybreaks
  \begin{align}
  f_1(a,k,z,q)
&=
\sum_{n=1}^{\infty}\left(\frac{k q^n}{1-kq^n}+\frac{ q^n a/z}{1-q^n a/z}-
\frac{a q^n}{1-aq^n}- \frac{ q^n k/z}{1-q^n k/z}\right),\\
 f_2(a,k,z,q)
&=\sum_{n=1}^{\infty}
\frac{(1-k q^{2n})(z,k/a;q)_{n}}
 {(1-kq^n)(q k/z,q a;q)_{n}(1-q^n)}\left( \frac{q a}{ z }\right )^{n},\notag\\
f_3(a,k,z,q)
&=
\sum_{n=1}^{\infty} \frac{(q\sqrt{k},-q\sqrt{k},z;q)_{n}(q;q)_{n-1}}
{(\sqrt{k},-\sqrt{k}, q k,q k/z;q)_{n}}\left( \frac{q a}{ z }\right
)^{n} \beta_n(a,k,q) \notag\\
& \phantom{asdasdf}
- \sum_{n=1}^{\infty}\frac{(z;q)_{n}(q;q)_{n-1}}{(q a ,q
a/z;q)_n}\left (\frac{q a}{z}\right)^{n}\alpha_n(a,k,q).\notag
  \end{align}
  }
  Then for all quadruples of values $(a,k,z,q)$ for which all three functions are defined, we have
  \begin{equation}
  f_1(a,k,z,q)=f_2(a,k,z,q)=f_3(a,k,z,q).
  \end{equation}
\end{theorem}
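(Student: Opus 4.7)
The plan is to reduce the three-way equality to two independent claims: (i) $f_2=f_3$ for every WP-Bailey pair, and (ii) $f_1=f_2$ as a standalone analytic identity. Both are $q$-hypergeometric arguments in the style of \cite{McL10}.

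For (i), substitute the defining relation
\[
\beta_n = \sum_{j=0}^n \frac{(k/a;q)_{n-j}(k;q)_{n+j}}{(q;q)_{n-j}(aq;q)_{n+j}}\alpha_j
\]
into the $\beta$-sum of $f_3$ and interchange the orders of summation, writing the result as $f_3=\sum_{j\ge 0}C_j\,\alpha_j$. Using the elementary identities $(q\sqrt k,-q\sqrt k;q)_n/(\sqrt k,-\sqrt k;q)_n=(1-kq^{2n})/(1-k)$ and $(k;q)_n=(1-k)(qk;q)_{n-1}$, together with $(q;q)_{n-1}/(q;q)_n=1/(1-q^n)$, the coefficient $C_0$ collapses to exactly the series defining $f_2$. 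For $j\ge 1$, the coefficient $C_j$ is the difference of an infinite sum over $n\ge j$ (from the $\beta_n$-piece) and a single explicit term (from the $\alpha_n$-piece); after shifting $n\to j+m$, the inner sum becomes a non-terminating very-well-poised series whose evaluation by a Bailey-type transformation exactly cancels the isolated term. Since $\alpha_0=1$ in every WP-Bailey pair, it follows that $f_3 = C_0 = f_2$.

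For (ii), a convenient starting point is the partial-fraction identity
\[
\frac{1-kq^{2n}}{(1-kq^n)(1-q^n)} = \frac{1}{1-q^n} + \frac{kq^n}{1-kq^n},
\]
which splits $f_2$ into two $q$-series. Each piece can then be expanded via a geometric series in a new summation index; interchanging orders produces inner sums in $n$ that are summable in closed form by a non-terminating $_6\phi_5$-type identity. After recombining, the resulting expression assembles into the four Lambert summands of $f_1$. (An equivalent route is to expand each summand of $f_1$ directly as a geometric series, swap orders, and evaluate the resulting inner $n$-sum by the same $_6\phi_5$.)

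The principal obstacle is step (i): the vanishing of $C_j$ for $j\ge 1$ is not a routine summation, but rather an instance of a non-terminating Bailey-type transformation. The factor $(q;q)_{n-1}$ in $f_3$ (rather than $(q;q)_n$) together with the lower limit $n\ge 1$ mean that boundary terms must be tracked carefully; the subtracted $\alpha_n$-series in $f_3$ is arranged precisely to supply the boundary correction needed for the non-terminating very-well-poised sum to close.
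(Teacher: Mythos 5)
There is an important preliminary point: the paper contains no proof of Theorem \ref{f123eq} at all --- it is explicitly ``recalled'' from the author's earlier work \cite{McL10} --- so there is nothing in-paper to compare your argument against, and your proposal must stand on its own. Your step (i) does stand up. Substituting the WP-Bailey relation into the $\beta$-sum and interchanging summation gives $f_3=\sum_{j\ge 0}C_j\,\alpha_j$ with $C_0=f_2$ exactly as you say, and for $j\ge 1$ the shift $n=j+i$ turns the $\beta$-contribution into a constant prefactor times the very-well-poised series
\[
\sum_{i\ge 0}\frac{1-kq^{2j+2i}}{1-kq^{2j}}\cdot\frac{(kq^{2j},\,q^{j},\,zq^{j},\,k/a;q)_i}{(q,\,kq^{j+1},\,kq^{j+1}/z,\,aq^{2j+1};q)_i}\left(\frac{qa}{z}\right)^{i},
\]
which is precisely a nonterminating $_6\phi_5$ with special parameter $kq^{2j}$ and argument $qa/z$; Rogers' nonterminating $_6\phi_5$ summation evaluates it to $(kq/z,aq^{j+1};q)_j/(kq^{j+1},aq/z;q)_j$, and multiplying by the prefactor yields exactly $(z;q)_j(q;q)_{j-1}(qa/z)^j/(qa,qa/z;q)_j$, i.e.\ the isolated $\alpha_j$-term, so $C_j=0$. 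Contrary to your closing worry, this is a routine (if fiddly) application of a standard summation, not an obstacle.

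The genuine gap is in step (ii). After your partial-fraction split and the geometric expansion of $1/(1-q^n)$ (or of $kq^n/(1-kq^n)$), the inner $n$-sums have the form
\[
\sum_{n\ge 1}\frac{(z,k/a;q)_n}{(qk/z,qa;q)_n}\left(\frac{q^{m+1}a}{z}\right)^{n},
\]
which are ${}_3\phi_2$'s whose argument $q^{m+1}a/z$ is neither the $q$-Gauss value nor well-poised; they admit no closed-form evaluation and are certainly not $_6\phi_5$'s, so the claimed ``recombination'' into $f_1$ cannot be carried out. (Your alternative route, expanding the summands of $f_1$ geometrically, only produces $\sum_{m\ge1}q^m\bigl(k^m+(a/z)^m-a^m-(k/z)^m\bigr)/(1-q^m)$ and makes no contact with $f_2$.) The mechanism that actually works is different: take the nonterminating $_6\phi_5$ summation with parameters $(k;\,b,\,z,\,k/a)$ and argument $qa/(bz)$, whose value is $(qk,qk/(bz),qa/b,qa/z;q)_\infty/(qk/b,qk/z,qa,qa/(bz);q)_\infty$, and differentiate both sides with respect to $b$ at $b=1$ (equivalently, apply $\lim_{b\to1}(\,\cdot\,-1)/(1-b)$). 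On the series side only the factor $(b;q)_n$ matters, and $\tfrac{d}{db}(b;q)_n\big|_{b=1}=-(q;q)_{n-1}$ produces exactly the $(q;q)_{n-1}/(q;q)_n=1/(1-q^n)$ appearing in $f_2$; on the product side the logarithmic derivative produces exactly the four Lambert series of $f_1$. With that substitution for step (ii), your two-part plan does prove the theorem.
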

The point here is that it follows easily from the definition of $f_1(a,k,z,q)$ (upon splitting the series into arithmetic progressions modulo $m$ (after the replacement $n\to mn-r$, $n\geq 1$, $0\leq r \leq m-1$), that if $m>1$ is an integer, then
\begin{equation}\label{f1eq}
  f_1(a,k,z,q)=\sum_{r=0}^{m-1}f_1(a q^{-r}, k q^{-r},z,q^m),
\end{equation}
and thus that \eqref{f1eq}  also holds if $f_1(a,k,z,q)$ is replaced with $f_2(a,k,z,q)$ or $f_3(a,k,z,q)$.

We ignore the choices for a WP-Bailey pair in $f_3(a,k,z,q)$, and instead use $f_2(a,k,z,q)$ to illustrate what has been said with the following somewhat unusual identity (which follows after the replacement $r \to m-r$).
\begin{corollary}
If $|qa/z|<1$, then for each positive integer $m$,
\begin{multline}\label{f2eq}
  \sum_{n=1}^{\infty}
\frac{(1-k q^{2n})(z,k/a;q)_{n}}
 {(1-kq^n)(a q,k q/z;q)_{n}(1-q^n)}\left( \frac{q a}{ z }\right )^{n}\\
 = \sum_{r=1}^{m}
 \sum _{n=1}^{\infty } \frac{
  \left(1-k q^{
   m (2n-1)+r}\right) \left(z,\frac{k}{a};q^m\right){}_n}
   {\left(1-k
   q^{m (n-1)+r}\right) \left(a q^{r},\frac{k
   q^{r}}{z};q^m\right){}_n\left(1-q^{m n}\right) }\left( \frac{q^{r} a}{ z }\right )^{n}.
\end{multline}
\end{corollary}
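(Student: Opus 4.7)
The plan is to invoke the general decomposition principle embodied in equation \eqref{f1eq} and transport it from $f_1$ to $f_2$ via Theorem \ref{f123eq}, which identifies the two functions wherever both are defined. Once one has established
\[
f_2(a,k,z,q) = \sum_{r=0}^{m-1} f_2\bigl(aq^{-r},\,kq^{-r},\,z,\,q^m\bigr),
\]
the remaining work reduces to expanding the summand on the right using the definition of $f_2$ and then performing a cosmetic reindexing $r \mapsto m-r$. To establish \eqref{f1eq} for $f_1$, one views $f_1(a,k,z,q)$ as a single series over $n \geq 1$ whose summand is a sum of four Lambert-type terms, and groups according to the residue of $n$ modulo $m$: writing $n = mN - r$ with $0 \leq r \leq m-1$ and $N \geq 1$, each of the four terms rescales under $kq^n = (kq^{-r})(q^m)^N$, $aq^n = (aq^{-r})(q^m)^N$, etc., so the $r$-th subseries is exactly $f_1$ evaluated at the shifted parameters $(aq^{-r}, kq^{-r}, z, q^m)$. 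Passage to $f_2$ is automatic from Theorem \ref{f123eq}.

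The next step is to substitute $a \mapsto aq^{-r}$, $k \mapsto kq^{-r}$, $q \mapsto q^m$ directly into the series defining $f_2$. Because the ratio $k/a$ is invariant under this substitution, the Pochhammer factor $(z,k/a;q^m)_n$ in the numerator is unchanged, while the shifts produce $1 - kq^{2mn - r}$ in the leading numerator, $1 - kq^{mn - r}$ together with $(aq^{m-r},\,kq^{m-r}/z;q^m)_n$ and $1 - q^{mn}$ in the denominator, and a geometric factor $\bigl(aq^{m-r}/z\bigr)^n$.

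Finally, reindex by $r \mapsto m - r$, so the outer sum runs over $1 \leq r \leq m$. Under this substitution $q^{m-r}$ becomes $q^r$, the exponent $2mn - r$ becomes $m(2n-1) + r$, and $mn - r$ becomes $m(n-1) + r$; each factor then matches, verbatim, the corresponding factor on the right-hand side of the stated identity. The argument is essentially mechanical, and there is no real obstacle; the only care required is clerical, namely tracking the shifted exponents through the reindexing and confirming that no $q$-Pochhammer factor drifts by a stray power of $q^m$. The hypothesis $|qa/z| < 1$ is precisely what guarantees absolute convergence of every series involved, and hence justifies the interchange of summation used in the initial dissection of $f_1$.
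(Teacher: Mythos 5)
Your proposal is correct and takes essentially the same route the paper intends: establish the modulus-$m$ decomposition \eqref{f1eq} for the Lambert series $f_1$ via the substitution $n = mN - r$, transfer it to $f_2$ through the equality $f_1 = f_2$ of Theorem \ref{f123eq}, and then substitute the shifted parameters into the series for $f_2$ and reindex $r \mapsto m-r$. The exponent bookkeeping ($2mn-r \mapsto m(2n-1)+r$, $mn-r \mapsto m(n-1)+r$, and the invariance of $k/a$) all checks out.
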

Upon letting $k\to 0$ in Theorem \ref{f123eq}, then the next corollary follows.

\begin{corollary}\label{cobapr}
Let $(\alpha_n(a,q), \beta_n(a,q))$ be a Bailey pair with respect to $a$, and let $g_i(a,z,q)$,  $i=1,2,3,4,$ be defined by
{\allowdisplaybreaks
\begin{align}\label{gieqs}
g_1(a,z,q)
&=\sum_{n=1}^{\infty}\left(\frac{a q^n/z}{1-a q^n/z}-\frac{a q^n}{1-aq^n}\right),\\
g_2(a,z,q)
&=\sum_{n=1}^{\infty} \frac{(z;q)_{n}}
{(q a;q)_{n}(1-q^n)}\left( \frac{q a}{ z }\right )^{n}\notag\\
g_3(a,z,q)
&=
\sum_{n=1}^{\infty} (z;q)_{n}(q;q)_{n-1}\left( \frac{q a}{ z }\right )^{n} \beta_n(a,q)\notag\\
& \phantom{asdasdf} - \sum_{n=1}^{\infty}\frac{(z;q)_{n}(q;q)_{n-1}}{(q a ,q
a/z;q)_n}\left (\frac{q a}{z}\right)^{n}\alpha_n(a,q), \notag\\
g_4(a,z,q)
&=-\sum_{n=1}^{\infty} \frac{(1-a q^{2n})(z;q)_{n}q^{n(n+1)/2}}
{(1-a q^n)(qa/z;q)_{n}(1-q^n)}\left( \frac{- a}{ z }\right )^{n}.\notag
\end{align} }
Then, for any triple of values for $a$, $z$ and  $q$ for which all of the series converge, it holds that
\begin{equation}\label{cobapreq}
  g_1(a,z,q)=g_2(a,z,q)=g_3(a,z,q)=g_4(a,z,q).
\end{equation}
\end{corollary}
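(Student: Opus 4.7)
The plan is to derive the four equalities of Corollary~\ref{cobapr} directly from Theorem~\ref{f123eq} by making two different parameter specialisations: setting $k=0$ will yield $g_1=g_2=g_3$, while setting $a=0$ (with $k$ relabelled as the free parameter) will yield $g_1=g_4$.

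First, set $k=0$ in $f_1=f_2=f_3$. In $f_1$ the first and fourth summands vanish outright, leaving $g_1(a,z,q)$. In $f_2$ one has $(1-kq^{2n})/(1-kq^n)\to 1$, while each of $(k/a;q)_n$ and $(qk/z;q)_n$ reduces to $(0;q)_n=1$, leaving $g_2(a,z,q)$. In $f_3$ the very-well-poised prefactor satisfies
\[
\frac{(q\sqrt{k},-q\sqrt{k};q)_n}{(\sqrt{k},-\sqrt{k};q)_n}
=\frac{(q^2 k;q^2)_n}{(k;q^2)_n}\longrightarrow 1,
\]
and $(qk,qk/z;q)_n\to 1$; moreover, the WP-Bailey recurrence \eqref{WPpair} at $k=0$ collapses onto the ordinary Bailey pair condition, so $(\alpha_n(a,0,q),\beta_n(a,0,q))$ is precisely the Bailey pair $(\alpha_n(a,q),\beta_n(a,q))$ with respect to $a$. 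Hence $f_3(a,0,z,q)=g_3(a,z,q)$, and the chain $g_1=g_2=g_3$ follows.

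Next, specialise $a=0$ in the equality $f_1=f_2$. In $f_1$ the second and third summands vanish, leaving
\[
f_1(0,k,z,q)=\sum_{n=1}^{\infty}\left(\frac{kq^n}{1-kq^n}-\frac{kq^n/z}{1-kq^n/z}\right)=-g_1(k,z,q).
\]
For $f_2$, the apparently singular product $(k/a;q)_n(qa/z)^n$ has a removable singularity at $a=0$, because
\[
(k/a;q)_n(qa/z)^n=\frac{q^n}{z^n}\prod_{j=0}^{n-1}(a-kq^j),
\]
which at $a=0$ equals $(-k/z)^n q^{n(n+1)/2}$; meanwhile $(qa;q)_n\to 1$. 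Thus
\[
f_2(0,k,z,q)=\sum_{n=1}^{\infty}\frac{(1-kq^{2n})(z;q)_n\,(-k/z)^n q^{n(n+1)/2}}{(1-kq^n)(qk/z;q)_n(1-q^n)}=-g_4(k,z,q),
\]
and combining with $f_1(0,k,z,q)=f_2(0,k,z,q)$ gives $g_1(k,z,q)=g_4(k,z,q)$. Together with the first step this completes the chain $g_1=g_2=g_3=g_4$.

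The main obstacle is justifying the $a\to 0$ specialisation, since $f_2(a,k,z,q)$ as originally written exhibits an apparent pole at $a=0$ through the factor $(k/a;q)_n$. The cancellation displayed above shows each summand is in fact entire in $a$ near $0$, so by analyticity the identity $f_1=f_2$ persists at $a=0$; uniform convergence of the series on compact subsets of a neighbourhood of $a=0$ within the stated convergence region justifies passing the limit inside the sum. Everything else is routine bookkeeping of which summands vanish in each $f_i$ under the respective specialisation.
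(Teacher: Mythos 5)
Your proof is correct. The first half --- setting $k=0$ in Theorem \ref{f123eq} so that $f_1,f_2,f_3$ collapse to $g_1,g_2,g_3$, with the WP-Bailey relation \eqref{WPpair} reducing to the ordinary Bailey pair condition --- is exactly the paper's argument. Where you genuinely diverge is in attaching $g_4$ to the chain: the paper obtains $g_4$ as a literal instance of $g_3$ by inserting the ``unit'' Bailey pair $\alpha_n(a,q)=\frac{(q\sqrt a,-q\sqrt a,a;q)_n}{(\sqrt a,-\sqrt a,q;q)_n}(-1)^nq^{n(n-1)/2}$, $\beta_n(a,q)=\delta_{n,0}$ (the first sum in $g_3$ then vanishes and the second simplifies to $g_4$), whereas you set $a=0$ in $f_1=f_2$, use the antisymmetry of $f_1$ under swapping $a$ and $k$ to get $-g_1(k,z,q)$ on one side, and evaluate the removable singularity $(k/a;q)_n(qa/z)^n\to(-k/z)^nq^{n(n+1)/2}$ to get $-g_4(k,z,q)$ on the other. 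Both routes are valid; your limit computation and the analyticity justification for passing to $a=0$ are sound. The paper's route is a bit cleaner in that it needs only the single specialization $k=0$ of the theorem and no limiting argument in $a$, and it makes transparent that $g_4$ is just $g_3$ for a particular Bailey pair; your route buys independence from knowing or verifying the unit Bailey pair, at the cost of the extra removable-singularity bookkeeping and a second specialization of the theorem.
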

Note that $g_4(a,z,q)$ is a special case of $g_3(a,z,q)$, derived by inserting the ``unit" Bailey pair
{\allowdisplaybreaks
\begin{align*}
\alpha_{n}(a,q)&=\frac{(q \sqrt{a}, -q
\sqrt{a},a;q)_n}{(\sqrt{a},-\sqrt{a},q;q)_n}\left(-1\right)^n q^{n(n-1)/2},\\
\beta_n(a,q)&=\begin{cases} 1&n=0, \notag\\
0, &n>1,
\end{cases}
\end{align*}
}
By a similar argument to that above, it easily follows that if $m$ is any positive integer, then for $1\leq i \leq 4$,
\begin{equation}\label{gieq}
  g_i(a,z,q)=\sum_{r=0}^{m-1}g_i(a q^{-r}, z,q^m).
\end{equation}
We defer giving a specific example until after stating the special case of Corollary \ref{cobapr} derived by letting $z\to \infty$.

\begin{corollary}\label{cobapr2}
Let $(\alpha_n(a,q), \beta_n(a,q))$ be a Bailey pair with respect to $a$, and let $h_i(a,q)$,  $i=1,\dots , 6$ be defined by
{\allowdisplaybreaks
\begin{align}\label{hieqs}
h_1(a,q)
&=\sum_{n=1}^{\infty}\frac{a q^n}{1-aq^n},\\
h_2(a,q)
&=-\sum_{n=1}^{\infty} \frac{q^{n(n+1)/2}\left(-a\right)^{n}}
{(q a;q)_{n}(1-q^n)}\notag\\
h_3(a,q)
&=
-\sum_{n=1}^{\infty} (q;q)_{n-1}\left( - a\right )^{n}q^{n(n+1)/2}
\beta_n(a,q) \notag\\
& \phantom{asdasdf}+ \sum_{n=1}^{\infty}\frac{(q;q)_{n-1}\left (-
a\right)^{n}q^{n(n+1)/2}}{(q a ;q)_n}\alpha_n(a,q), \notag\\
h_4(a,q)
&=\sum_{n=1}^{\infty} \frac{(1-aq^{2n})q^{n^2} a^{n}}
{(1-a q^{n})(1-q^n)},\notag \\
h_5(a,q)
&=\frac{-1}{(aq;q)_{\infty}}\sum_{n=1}^{\infty}\frac{n (-a)^n q^{n(n+1)/2}}{(q;q)_n},\notag\\
h_6(a,q)
&=(aq;q)_{\infty}\sum_{n=1}^{\infty}\frac{n a^n q^n}{(q;q)_n}. \notag
\end{align} }
Then, for any pair of values for $a$ and  $q$ for which all of the series converge, it holds that
\begin{equation}\label{cobapreq2}
  h_1(a,q)=h_2(a,q)=h_3(a,q)=h_4(a,q)=h_5(a,q)=h_6(a,q).
\end{equation}
\end{corollary}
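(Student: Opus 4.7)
The plan is to split the six identities into two groups. The equalities $h_1 = h_2 = h_3 = h_4$ will come out of Corollary \ref{cobapr} by letting $z \to \infty$ in each of the four functions $g_i(a,z,q)$, while $h_5 = h_1$ and $h_6 = h_1$ will follow from logarithmic differentiation of $(aq;q)_\infty$ together with its Euler expansions.

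For the first block, the key asymptotic is $(z;q)_n \sim (-z)^n q^{n(n-1)/2}$ as $z\to\infty$, which combines with $(qa/z;q)_n\to 1$ and the factor $(qa/z)^n$ appearing in $g_2,g_3$ (or $(-a/z)^n$ in $g_4$) to produce a finite limit in each summand. A routine term-by-term calculation then yields $g_1\to -h_1$ (since $\frac{aq^n/z}{1-aq^n/z}\to 0$), $g_2\to -h_2$ (because $(-qa)^n q^{n(n-1)/2}=(-a)^n q^{n(n+1)/2}$), $g_3\to -h_3$ (the same cancellation occurs in both sub-sums of $g_3$, for any Bailey pair), and $g_4\to -h_4$ (with $a^nq^{n^2}$ arising as $q^{n(n+1)/2}\cdot q^{n(n-1)/2}$). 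Since Corollary \ref{cobapr} asserts $g_1 = g_2 = g_3 = g_4$, chaining and cancelling the common sign gives the desired four-way equality among $h_1,\dots,h_4$.

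For the second block, differentiating $\log(aq;q)_\infty=\sum_{n\geq 1}\log(1-aq^n)$ and multiplying by $a$ gives $a\,\partial_a\log(aq;q)_\infty = -h_1(a,q)$, so that $a\,\partial_a(aq;q)_\infty = -(aq;q)_\infty\,h_1(a,q)$ and correspondingly $a\,\partial_a\frac{1}{(aq;q)_\infty}=\frac{h_1(a,q)}{(aq;q)_\infty}$. On the other hand, applying $a\,\partial_a$ termwise to the Euler expansions
\[
(aq;q)_\infty = \sum_{n\geq 0}\frac{(-a)^n q^{n(n+1)/2}}{(q;q)_n},\qquad \frac{1}{(aq;q)_\infty}=\sum_{n\geq 0}\frac{a^n q^n}{(q;q)_n}
\]
produces precisely the series appearing in $h_5$ and $h_6$. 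Equating the two expressions for each of these derivatives and rearranging immediately yields $h_5(a,q) = h_1(a,q) = h_6(a,q)$, completing the proof.

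The main obstacle is the routine justification of passing $z \to \infty$ inside the infinite series defining each $g_i$. For $g_1, g_2, g_4$ the $n$-th summand admits an obvious dominating bound of order $|q|^{cn^2}$ that is uniform in $|z|\geq z_0$, so the exchange is handled by dominated convergence; for $g_3$, the growth hypotheses on $\alpha_n,\beta_n$ already implicit in the convergence assumption for Corollary \ref{cobapr} suffice to justify the same exchange, so in practice this step requires no additional work beyond what is already present in the statement of Corollary \ref{cobapr}.
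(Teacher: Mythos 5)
Your proof is correct and, for the equalities $h_1=h_2=h_3=h_4$, it is exactly the paper's route: the paper obtains Corollary \ref{cobapr2} as "the special case of Corollary \ref{cobapr} derived by letting $z\to\infty$," which is precisely your term-by-term limit computation using $(z;q)_n\sim(-z)^nq^{n(n-1)/2}$; you merely make explicit the dominated-convergence justification that the paper leaves tacit. The one place you genuinely diverge is $h_5$ and $h_6$: the paper does not prove these equalities at all but cites \cite{McL10} for them, whereas you supply a short self-contained argument by applying $a\,\partial_a$ to $\log(aq;q)_{\infty}$ and to the two Euler expansions of $(aq;q)_{\infty}$ and $1/(aq;q)_{\infty}$. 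That computation checks out (e.g. $a\,\partial_a(aq;q)_{\infty}=-(aq;q)_{\infty}h_1$ equated with the termwise derivative $\sum_{n\ge1}n(-a)^nq^{n(n+1)/2}/(q;q)_n$ gives $h_1=h_5$, and likewise for $h_6$), so your write-up is actually more complete than the paper's on this point.
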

Remark: The equivalence of $h_5(a,q)$ and $h_6(a,q)$ to $h_1(a,q),\dots ,h_4(a,q)$ is shown in \cite{McL10}.

Once again, by reasoning as above, it is easy to show that if $m$ is any positive integer, then for $1\leq i \leq 6$,
\begin{equation}\label{hieq}
  h_i(a,q)=\sum_{r=0}^{m-1}h_i(a q^{-r}, q^m).
\end{equation}
The following pair of somewhat curious identities follow from $h_5(a,q)$ and $h_6(a,q)$ (after making the replacement $r \to m-r$ on the left side).
\begin{corollary}
Let $m$ be a positive integer.\\
(i) If $|q|<1$, then
\begin{multline}\label{h5eq}
\frac{1}{(aq;q)_{\infty}}\sum_{n=1}^{\infty} \frac{n(-a)^n q^{n(n+1)/2}}{(q;q)_n}\\
= \sum_{r=1}^{m}\frac{1}{(aq^{r};q^m)_{\infty}}\sum_{n=1}^{\infty} \frac{n(-a)^n q^{mn(n-1)/2+nr}}{(q^m;q^m)_n}.
\end{multline}
If $|aq|<1$, then
\begin{equation}\label{h6eq}
(aq;q)_{\infty}\sum_{n=1}^{\infty} \frac{n a^n q^{n}}{(q;q)_n}
= \sum_{r=1}^{m}(aq^{r};q^m)_{\infty}\sum_{n=1}^{\infty} \frac{na^n q^{n r}}{(q^m;q^m)_n}.
\end{equation}
\end{corollary}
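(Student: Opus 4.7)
My plan is to deduce both identities directly from the general relation \eqref{hieq} applied to the representations $h_5$ and $h_6$ from Corollary \ref{cobapr2}. The starting point is the observation, established earlier in the paper, that the Lambert series $h_1(a,q)=\sum_{n\geq 1}aq^n/(1-aq^n)$ splits naturally modulo $m$ via $n\to mn-r$, producing $h_1(a,q)=\sum_{r=0}^{m-1}h_1(aq^{-r},q^m)$. Since Corollary \ref{cobapr2} asserts $h_1=h_5=h_6$, the same dissection identity is inherited by $h_5$ and $h_6$. What remains is purely a substitution computation to show the right-hand sides of \eqref{h5eq} and \eqref{h6eq} are precisely $\sum_{r=0}^{m-1}h_5(aq^{-r},q^m)$ and $\sum_{r=0}^{m-1}h_6(aq^{-r},q^m)$ respectively, after a cosmetic re-indexing $r\to m-r$.

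For \eqref{h6eq}, I substitute $a\to aq^{-r}$ and $q\to q^m$ into $h_6(a,q)=(aq;q)_\infty\sum_{n\geq 1}na^n q^n/(q;q)_n$. This yields
\[
h_6(aq^{-r},q^m)=(aq^{m-r};q^m)_\infty\sum_{n=1}^{\infty}\frac{na^n q^{(m-r)n}}{(q^m;q^m)_n}.
\]
Summing over $0\leq r\leq m-1$ and relabelling $r\to m-r$ so that $r$ runs from $1$ to $m$ converts this into exactly the right side of \eqref{h6eq}, and the left side is just $h_6(a,q)$. Since $h_6=h_1$ and the dissection of $h_1$ is valid whenever the series converges (which is guaranteed by $|aq|<1$, so that all the shifted copies $h_1(aq^{-r},q^m)$ also converge, as do the product expansions), this finishes the second identity.

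For \eqref{h5eq} the argument is identical in structure: substitute $a\to aq^{-r}$ and $q\to q^m$ into $h_5(a,q)=-\frac{1}{(aq;q)_\infty}\sum_{n\geq 1}n(-a)^n q^{n(n+1)/2}/(q;q)_n$ to obtain
\[
h_5(aq^{-r},q^m)=\frac{-1}{(aq^{m-r};q^m)_\infty}\sum_{n=1}^{\infty}\frac{n(-a)^n q^{mn(n+1)/2-rn}}{(q^m;q^m)_n}.
\]
Re-indexing $r\to m-r$ turns the exponent $mn(n+1)/2-(m-r)n$ into $mn(n-1)/2+rn$, which matches the exponent in \eqref{h5eq}; clearing the overall minus sign recovers the stated identity.

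There is no real obstacle here, only bookkeeping: one must check that the exponent arithmetic in the $h_5$ substitution works out as claimed (the shift $-rn$ combines with $mn(n+1)/2$ after reflection to give $mn(n-1)/2+rn$), and one must note that the dissection identity \eqref{hieq}, which is trivial from the Lambert-series form of $h_1$, propagates to $h_5$ and $h_6$ solely through the equalities proved in Corollary \ref{cobapr2}. The convergence hypotheses $|q|<1$ and $|aq|<1$ stated in the corollary are exactly what is needed for the series representations of $h_5$ and $h_6$, and hence of every summand on both sides, to be valid.
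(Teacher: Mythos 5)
Your proof is correct and follows essentially the same route as the paper: the paper derives \eqref{h5eq} and \eqref{h6eq} precisely by applying the dissection identity \eqref{hieq} (itself obtained from the Lambert-series form of $h_1$ and the chain of equalities in Corollary \ref{cobapr2}) to $h_5$ and $h_6$, followed by the re-indexing $r\to m-r$. Your exponent bookkeeping for the $h_5$ case and the convergence remarks match what the paper leaves implicit.
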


\section{Further applications - the Fine function $F(a,b;t)$}
The ideas in the present paper may be applied in a number of other situations. One of these is the collection of identities that exist involving the function
\begin{equation}\label{9fabtq}
F(a,b;t):=\sum_{n=0}^{\infty} \frac{(a q;q)_n}{(b q;q)_n}t^n,
\end{equation}
which was investigated in \cite{F88} by Fine. One important identity involving the function $F(a,b;t)$ that he found was the
\emph{Rogers-Fine identity},  discovered previously by Rogers \cite{R17}:
\begin{equation}\label{9rf1}
\sum_{n=0}^{\infty} \frac{(\alpha;q)_n}{(\beta;q)_n}\tau^n
=
\sum_{n=0}^{\infty} \frac{(1-\alpha \tau q^{2n})(\alpha, \alpha \tau q/\beta;q)_n\beta^n\tau^n q^{n^2-n}}{(\beta;q)_n(\tau;q)_{n+1}}.
\end{equation}
Note that the left side of \eqref{9rf1} is $F(\alpha/q, \beta/q;\tau)$.
Fine derived several other identities involving the function on the left side of \eqref{9rf1} (for simplicity in the present paper, results are stated in terms of this series, rather than $F(a,b;t)$), including
\begin{equation}\label{9rf2}
\sum_{n=0}^{\infty} \frac{(\alpha;q)_n}{(\beta;q)_n}\tau^n
=
\sum_{n=0}^{\infty} \frac{(\beta/\alpha;q)_n(-\alpha t)^n q^{n(n-1)/2}}{(\beta ;q)_n(t;q)_{n+1}}.
\end{equation}
From the perspective of the present paper, the special cases of these identities that are of interest arise from setting $\beta = \alpha q$ and dividing both sides in each case by $1-\alpha$. This leads to
\begin{align}\label{9rf3}
F_1(\alpha,\tau,q):=\sum_{n=0}^{\infty}\frac{\tau^n}{1-\alpha q^n}
&=\sum_{n=0}^{\infty}\frac{(1-\alpha \tau q^{2n})\alpha^n \tau^nq^{n^2}}
{(1-\alpha  q^{n})(1- \tau q^{n})}=:F_2(\alpha,\tau,q)\notag\\
&=\sum_{n=0}^{\infty} \frac{(q;q)_n(-\alpha t)^n q^{n(n-1)/2}}{(\alpha ;q)_{n+1}(t;q)_{n+1}}=:F_3(\alpha,\tau,q).
\end{align}
The point in stating these special cases is that, by similar reasoning to that employed earlier,
it follows that if $m$ is any positive integer, then
\begin{equation}\label{9rf4}
  F_i(\alpha,\tau,q)=\sum_{r=0}^{m-1}t^r F_i(\alpha q^r,\tau^m,q^m),\hspace{25pt} i=1,2,3.
\end{equation}
A similar result may be derived from any other identity for the function $F(a,b;t)$.

We leave it to the reader to possibly find some useful applications of these facts.

{\allowdisplaybreaks

}

\end{document}